\RequirePackage{fix-cm}
\PassOptionsToPackage{numbers,sort&compress,square}{natbib}
\documentclass[smallcondensed,natbib]{svjour3}     %
\ifdefined \pdfoutput
  \pdfoutput=1
\fi
\bibpunct[,]{[}{]}{,}{n}{,}{,}

\makeatletter
\if@twocolumn
  \renewcommand\normalsize{%
   \@setfontsize\normalsize\@xpt{12.5pt}%
   \abovedisplayskip=3 mm plus6pt minus 4pt
   \belowdisplayskip=3 mm plus6pt minus 4pt
   \abovedisplayshortskip=0.0 mm plus6pt
   \belowdisplayshortskip=2 mm plus4pt minus 4pt
   \let\@listi\@listI}%

  \renewcommand\small{%
   \@setfontsize\small{8.5pt}\@xpt
   \abovedisplayskip 8.5\p@ \@plus3\p@ \@minus4\p@
   \abovedisplayshortskip \z@ \@plus2\p@
   \belowdisplayshortskip 4\p@ \@plus2\p@ \@minus2\p@
   \def\@listi{\leftmargin\leftmargini
               \parsep 0\p@ \@plus1\p@ \@minus\p@
               \topsep 4\p@ \@plus2\p@ \@minus4\p@
               \itemsep0\p@}%
   \belowdisplayskip \abovedisplayskip}

\else
  \if@smallext
   \renewcommand\normalsize{%
   \@setfontsize\normalsize\@xpt\@xiipt
   \abovedisplayskip=3 mm plus6pt minus 4pt
   \belowdisplayskip=3 mm plus6pt minus 4pt
   \abovedisplayshortskip=0.0 mm plus6pt
   \belowdisplayshortskip=2 mm plus4pt minus 4pt
   \let\@listi\@listI}%

  \renewcommand\small{%
   \@setfontsize\small\@viiipt{9.5pt}%
   \abovedisplayskip 8.5\p@ \@plus3\p@ \@minus4\p@
   \abovedisplayshortskip \z@ \@plus2\p@
   \belowdisplayshortskip 4\p@ \@plus2\p@ \@minus2\p@
   \def\@listi{\leftmargin\leftmargini
               \parsep 0\p@ \@plus1\p@ \@minus\p@
               \topsep 4\p@ \@plus2\p@ \@minus4\p@
               \itemsep0\p@}%
   \belowdisplayskip \abovedisplayskip}
 \else
  \renewcommand\normalsize{%
   \@setfontsize\normalsize{9.5pt}{11.5pt}%
   \abovedisplayskip=3 mm plus6pt minus 4pt
   \belowdisplayskip=3 mm plus6pt minus 4pt
   \abovedisplayshortskip=0.0 mm plus6pt
   \belowdisplayshortskip=2 mm plus4pt minus 4pt
   \let\@listi\@listI}%

  \renewcommand\small{%
   \@setfontsize\small\@viiipt{9.25pt}%
   \abovedisplayskip 8.5\p@ \@plus3\p@ \@minus4\p@
   \abovedisplayshortskip \z@ \@plus2\p@
   \belowdisplayshortskip 4\p@ \@plus2\p@ \@minus2\p@
   \def\@listi{\leftmargin\leftmargini
               \parsep 0\p@ \@plus1\p@ \@minus\p@
               \topsep 4\p@ \@plus2\p@ \@minus4\p@
               \itemsep0\p@}%
   \belowdisplayskip \abovedisplayskip}
  \fi
\fi

\makeatother

\usepackage[T1]{fontenc}
\usepackage[english]{babel}
\usepackage[autostyle]{csquotes}

\usepackage{xcolor}
\usepackage[final]{graphicx}
\usepackage{fixltx2e}
\usepackage{bm}
\usepackage{microtype}
\usepackage{mathtools}
\usepackage{amssymb,amsmath,amsfonts}
\usepackage{leftidx}
\usepackage{ifluatex}
\usepackage{enumitem}
\usepackage{graphicx}
\usepackage{txfonts}

\usepackage{siunitx}
\usepackage{subcaption}

\usepackage[final,framed,numbered]{matlab-prettifier}
\lstset
{
  style                 = Matlab-Pyglike,
  basicstyle            = \normalsize\mlttfamily,
  upquote               = true,
  mlunquotedstringdelim = {/*}{*/},
  mlmoresharedvars      = {myglobalvar},
  mlscaleinline         = true,
  escapechar            =`,
}

\makeatletter
\newcommand{\footnoteref}[1]{%
\ltx@ifpackageloaded{hyperref}{%
  \ifHy@hyperfootnotes%
    \hbox{\hyperref[#1]{%
            %
            %
            \@textsuperscript {\normalfont \ref*{#1}}}}%
  \else%
    \hbox{\@textsuperscript {\normalfont \ref*{#1}}}%
  \fi%
}{%
    \hbox{\@textsuperscript {\normalfont \ref{#1}}}%
 }%
}
\makeatother

\usepackage{doi}

\ifluatex
\providecommand{\versionortoday}{%
  \directlua{
    local f = io.popen('git describe --dirty')
    if f == nil then
    else
      tex.print('Version: ', f:read())
      f:close()
    end
  }
}
\else
\providecommand{\versionortoday}{\today}
\fi

\spnewtheorem{assumption}{Assumption}{\bfseries}{\rmfamily}
\spnewtheorem{algorithm}{Algorithm}{\bfseries}{\rmfamily}
\spnewtheorem{implementation}{Implementation}{\bfseries}{\rmfamily}

\definecolor{tol00}{HTML}{AA4499}
\definecolor{tol01}{HTML}{882255}
\definecolor{tol02}{HTML}{CC6677}
\definecolor{tol03}{HTML}{DDCC77}
\definecolor{tol04}{HTML}{999933}
\definecolor{tol05}{HTML}{117733}
\definecolor{tol06}{HTML}{44AA99}
\definecolor{tol07}{HTML}{88CCEE}
\definecolor{tol08}{HTML}{332288}

\colorlet{dotcolor}{tol00}
\colorlet{figcolor0}{tol00}
\colorlet{figcolor1}{tol04}
\colorlet{figcolor2}{tol08}

\newcommand{\bu}{{u}}
\newcommand{\bw}{{w}}
\newcommand{\bg}{{g}}

\newcommand{\bx}{\bm{x}}
\newcommand{\bp}{\bm{p}}
\newcommand{\balpha}{\bm{\alpha}}

\AtBeginDocument{\def\Re{\mathbb{R}}}
\AtBeginDocument{}

\DeclareMathOperator*{\argmin}{arg\,min}

\DeclareMathOperator*{\csch}{csch}
\DeclareMathOperator*{\sech}{sech}

\providecommand\given{}
\newcommand\SetSymbol[1][]{\nonscript\:#1\vert\nonscript\:\allowbreak}
\DeclarePairedDelimiterX\Set[1]\{\}{%
  \renewcommand\given{\SetSymbol[\delimsize]}
  #1
}

\journalname{myjournal}

\titlerunning{Solving 1D Conservation Laws Using Pontryagin's Minimum Principle}
\title{Solving 1D Conservation Laws Using Pontryagin's Minimum Principle
  \thanks{This document has been approved for public release; its
    distribution is unlimited.}
}

\author{Wei Kang \and Lucas~C. Wilcox}
\date{\versionortoday}
\institute{Wei Kang \and Lucas~C. Wilcox \at{} Department of Applied Mathematics,
  Naval Postgraduate School, Monterey, CA \\
  \email{\{wkang,lwilcox\}@nps.edu}
}

\begin{document}
\maketitle

\begin{abstract}
  This paper discusses a connection between scalar convex conservation
  laws and Pontryagin's minimum principle.
  For flux functions for which an associated optimal control problem
  can be found, a minimum value solution of the conservation law
  is proposed.
  For scalar space-independent convex conservation laws such a
  control problem exists and the minimum value solution of the
  conservation law is equivalent to the entropy solution.
  This can be seen as a generalization of the Lax--Oleinik formula to
  convex (not necessarily uniformly convex) flux functions.
  Using Pontryagin's minimum principle, an algorithm for finding the
  minimum value solution pointwise of scalar convex conservation laws
  is given.
  Numerical examples of approximating the solution of both
  space-dependent and space-independent conservation laws are provided
  to demonstrate the accuracy and applicability of the proposed
  algorithm.
  Furthermore, a MATLAB routine using Chebfun is provided (along with
  demonstration code on how to use it) to approximately solve scalar
  convex conservation laws with space-independent flux functions.
\end{abstract}

\keywords{%
  Conservation laws \and
  Pontryagin's minimum principle \and
  Spectral method \and
  Burgers' equation}

\section{Introduction}

Conservation laws arise naturally from continuum physics.
Scalar convex conservation laws have been studied extensively,
developing both theory and numerical methods, see for
example \citet{Dafermos2010,evans,Lax1973,Leveque1992}, and the
references there in.
Many of the approaches taken, both theoretical and numerical, use the
conservation law form of the problem.
However, for some applications, such as landform
evolution~\cite{Luke1972} and traffic flow~\cite{Newell1993},
it can be natural to consider the integrated unknown from the
conservation law.
This leads to a Hamilton--Jacobi equation.
This connection between conservation laws and Hamilton--Jacobi equations has
been exploited in developing numerical algorithms for Hamilton--Jacobi
equations~\cite{OsherSethian1988,OsherShu1991}, see
\citet[Chapter~5]{OsherFedkiw2003} for a description of this history.
The Hamilton--Jacobi equation
can be solved using the method of characteristics.
When characteristics cross a minimization principle is used by
\citet{Luke1972} and \citet{Newell1993} to select a unique physically
meaningful solution.
\citet{Daganzo2005} has shown that this minimization procedure
provides a stable solution.
Sharing the similar idea of Hamilton--Jacobi equation and
minimization, in this work we take a different approach in the study
of a general family of conservation laws and initial conditions.
Motivated by the the causality free method of solving
Hamilton--Jacobi--Bellman equations by
\citet{kangwilcox,KangWilcox2015arXiv}, we explore
the connections between the entropy solution of scalar convex
conservation laws and optimal control theory as well as the associated
Pontryagin's minimum principle, which leads to an efficient numerical
method for solving the conservation law.

\citet{corrias} proved that the entropy solution of a scalar convex
conservation law is the gradient of the viscosity solution of an
associated Hamilton--Jacobi equation.
This underlying relationship was also used to find the numerical solution of
conservation laws with large time steps~\citep{QiuShu2008}.
This paper extends this connection further and introduces optimal
control problems related to conservation laws.
When an associated optimal control problem can be found, which is
always the case for conservation laws with space-independent flux
functions, Pontryagin's minimum principle gives a set of necessary
conditions which can be used, along with the cost function, in an
algorithm to find a minimum value solution of the conservation law.
For conservation laws with space-independent flux functions, we show
that this minimum value solution is indeed the entropy solution.
This generalizes the Lax--Oleinik formula (see for example
\citet[Section~3.4.2]{evans}) to convex (not necessarily uniformly
convex) flux functions.

An alternative to the method proposed in this paper is the recent work of
\citet{DarbonOsher2016arXiv} and
\citet{ChowDarbonOsherYin2016report}, which solves the Hopf formula to find the
viscosity solution of Hamilton--Jacobi--Bellman equations. The algorithm is
causality free and it is effective in solving high dimensional problems.
The algorithm given by \citet{DarbonOsher2016arXiv}, like the one explored in
this paper, also has an interesting link to conservation laws.
For problems with a space-independent Hamiltonian and a convex initial
condition, the algorithm converges to not only the solution of the
Hamilton--Jacobi--Bellman equations, but also its gradient, which is a
entropy solution of the corresponding conservation law.

The necessary conditions from Pontryagin's minimum principle are a set of
boundary value problems that can be solved numerically using various techniques,
we use a spectral collocation method provided by the Chebfun software
package~\cite{chebfun}.
Although the solution of a conservation law is the gradient of the
value function of an optimal control, Pontryagin's minimum principle
contains this gradient as a part of its solution so that numerical
differentiation is not necessary.
For space-independent flux functions these necessary conditions
become algebraic equations, which can be approximately solved
using piecewise polynomial functions; again we use Chebfun in our
implementation of the algorithm.
For this case, we provide code for a numerical implementation of
solving scalar convex conservation laws with space-independent flux
functions pointwise.
The algorithm does not need a grid in space and time. It achieves high
accuracy even around shocks.
In both space-dependent and -independent
cases, minimizing the cost function provides a unique solution
of the control problem and its gradient provides a pointwise solution
of the conservation law.
We finish our discussion with numerical examples demonstrating
accuracy and applicability of the proposed algorithm where scalar convex
conservation laws, such as Burgers' equation, are solved and compared
to analytical or numerical solutions using other techniques.

\section{From Pontryagin's minimum principle to conservation law}
\label{sec_2}
In this section, we outline the underlying relationship between
Pontryagin's minimum principle (PMP) and the solution of a
conservation law.
Consider the conservation law
\begin{subequations}
\label{cons_law}
\begin{alignat}{2}
  \bu_t+{(F(x,\bu))}_x&=0   && \quad\text{in }\Re\times(0,T)\\
                   \bu&=\bg && \quad\text{on }\Re\times\{t=0\},
\end{alignat}
\end{subequations}
where the flux function $F:\Re\times\Re\to\Re$ and initial condition
$\bg:\Re\to\Re$ are given and $\bu:\Re\times(0,T)\to\Re$ is the
unknown, $\bu=\bu(x,t)$.
The associated Hamilton--Jacobi (HJ) equation has the following form
\begin{subequations}
\label{HJ}
\begin{alignat}{2}
  \bw_t+F(x,\bw_x)&=0 && \quad\text{in }\Re\times(0,T)\\
               \bw&=G && \quad\text{on }\Re\times\{t=0\},
\intertext{where the initial condition $G:\Re\to\Re$ is such that}
G'&=\bg &&\quad\text{almost everywhere in }\Re \label{G2g}
\end{alignat}
\end{subequations}
and $\bw:\Re\times(0,T)\to\Re$ is the
unknown, $\bw=\bw(x,t)$.
At any point where $\bw(x,t)$ has second order derivatives, we have
\begin{equation}
\label{u2w}
\bu(x,t)=\bw_x(x,t)
\end{equation}
is a solution of the conservation law~\eqref{cons_law}.
Given the initial condition, $(x,s)\in\Re\times[0,T]$,
consider a related problem of optimal control
\begin{subequations}
\label{opt_contr}
\begin{equation}
  \min_{\balpha} J_{x,s}[\balpha] = \min_{\balpha} \int_{s}^{T} L(\bx(r),\balpha(r))\;dr +G(\bx(T)),
\end{equation}
where the response of the system, $\bx:[s,T]\to\Re$, is subject to
\begin{equation}
\dot \bx(r)=\balpha (r), \quad \bx(s)=x.
\end{equation}
\end{subequations}
$\balpha:[0,T]\to A$ is a measurable function that represents the
control input; $A$ is a subset of $\Re$;
$J_{x,s}:A\to\Re$ is the cost function; and
$L:\Re\times A\to\Re$ is called the \emph{Lagrangian}.  Define the
\emph{Hamiltonian}, $H:\Re\times\Re\times A\to\Re$ as
\[H(x,p,\alpha)=p\alpha + L(x,\alpha)\]
and the \emph{value function}, $V:\Re\times[0,T]\to\Re$, as
\begin{equation}
  \label{value}
  V(x,s) = \inf_{\balpha}J_{x,s}[\balpha].
\end{equation}
Then, the associated Hamilton--Jacobi--Bellman (HJB) equation is
\begin{subequations}
\label{HJB}
\begin{alignat*}{2}
  V_s+\min_\alpha H(x,V_x,\alpha) &=0 && \quad\text{in }\Re\times(0,T)  \\
                                V&=G && \quad\text{on }\Re\times\{s=T\}.
\end{alignat*}
\end{subequations}
If we define $t=T-s$ and $\bw(x,t)=V(x,T-t)$ we can rewrite this HJB
equation as
\begin{subequations}
\label{HJ_b}
\begin{alignat}{2}
\bw_t(x,t)-\min_{\alpha} H(x,\bw_x,\alpha)&=0 && \quad\text{in }\Re\times(0,T)\\
        \bw&=G                              && \quad\text{on }\Re\times\{t=0\}.
\end{alignat}
\end{subequations}
Suppose the flux function and Hamiltonian are related such that
\begin{equation}
\label{F2H}
F(x,p)=-\min_\alpha H(x,p,\alpha)
\end{equation}
then the HJB equation~\eqref{HJ_b} is equivalent to the HJ
equation~\eqref{HJ}. Therefore, solving the optimal control
problem~\eqref{opt_contr} leads us to a solution of the HJ
equation~\eqref{HJ}; and then to a solution of the conservation
law~\eqref{cons_law} through~\eqref{u2w}.  We summarize this relation
in the following proposition.

\begin{proposition}
\label{prop0}
Let $V(x, s)$ be the value function defined in~\eqref{value} by the
optimal control~\eqref{opt_contr}. Suppose the flux function, $F$, in
the conservation law~\eqref{cons_law} and HJ equation~\eqref{HJ}
satisfies~\eqref{F2H} and suppose $G'(x)=\bg(x)$ almost
everywhere. Then the function $\bw(x,t)=V(x,T-t)$ satisfies the
HJ equation~\eqref{HJ} and the function
$\bu(x,t)=\bw_{x}(x,t)$ satisfies the conservation
law~\eqref{cons_law} at all points where the second order derivatives
of $\bw(x,t)$ exist.
\end{proposition}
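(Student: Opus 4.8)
The plan is to assemble the proposition by chaining the three correspondences already laid out above: the link from the optimal control problem to the HJB equation, the change of variables turning HJB into the reversed-time form~\eqref{HJ_b}, and the differentiation turning the HJ equation into the conservation law. No genuinely new identity is needed; the work is in checking that each link is valid under the stated hypotheses on $F$ and $G$.

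First I would invoke the dynamic programming principle to establish that the value function $V$ defined in~\eqref{value} by the optimal control~\eqref{opt_contr} satisfies the HJB equation~\eqref{HJB}, together with the terminal condition $V(x,T)=G(x)$, which is immediate since $J_{x,T}[\balpha]=G(x)$ for every admissible $\balpha$. Setting $t=T-s$ and $\bw(x,t)=V(x,T-t)$, the chain rule gives $\bw_t=-V_s$ and $\bw_x=V_x$, so the HJB equation becomes $\bw_t-\min_\alpha H(x,\bw_x,\alpha)=0$, which is~\eqref{HJ_b}, and the terminal condition at $s=T$ becomes the initial condition $\bw(x,0)=G(x)$. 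Substituting the hypothesized relation~\eqref{F2H}, namely $-\min_\alpha H(x,p,\alpha)=F(x,p)$, converts this into $\bw_t+F(x,\bw_x)=0$ with $\bw(\cdot,0)=G$, which is exactly the HJ equation~\eqref{HJ}. This settles the first assertion.

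For the second assertion, at any point where $\bw$ has second order derivatives I would differentiate the HJ equation~\eqref{HJ} with respect to $x$. Since $\bw$ is twice differentiable there, the mixed partials agree, $\partial_t\partial_x\bw=\partial_x\partial_t\bw$, so writing $\bu=\bw_x$ yields $\bu_t+(F(x,\bu))_x=0$, which is the conservation law~\eqref{cons_law}; this is precisely the pointwise observation recorded in~\eqref{u2w}. The initial condition follows from $\bu(x,0)=\bw_x(x,0)=G'(x)=\bg(x)$, which holds almost everywhere by the hypothesis on $G$.

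The main obstacle is the regularity underlying the first link. The value function $V$ is in general only Lipschitz continuous and solves the HJB equation only in the viscosity sense, so the chain-rule manipulations above must be read either at points of differentiability or within the viscosity framework, and I would state the HJB satisfaction accordingly and confirm that the change of variables $t=T-s$ and the substitution~\eqref{F2H} preserve the appropriate notion of solution. The proposition deliberately sidesteps this difficulty for the conservation law by restricting its conclusion to points where the second order derivatives of $\bw$ exist, which is exactly the smooth regime in which the differentiation and the interchange of mixed partials are classically justified; the remaining care is simply to confirm that no hidden smoothness of $F$, $L$, or $G$ beyond what is assumed is being used in passing through the minimization in~\eqref{F2H}.
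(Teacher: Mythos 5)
Your proposal is correct and follows essentially the same route as the paper: the paper states Proposition~\ref{prop0} as a summary of the derivation immediately preceding it (value function satisfies the HJB equation, the change of variables $t=T-s$, $\bw(x,t)=V(x,T-t)$ turns it into~\eqref{HJ_b}, the relation~\eqref{F2H} identifies this with~\eqref{HJ}, and differentiation in $x$ with equality of mixed partials gives the conservation law via~\eqref{u2w}), which is exactly the chain you reconstruct. Your closing remark about $V$ being in general only a Lipschitz viscosity solution is a point of care the paper glosses over, and it is consistent with the proposition's restriction to points where second derivatives of $\bw$ exist.
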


A conservation law may have multiple solutions with non-smooth
properties such as shock and rarefaction waves. Uniqueness has been
proved in the literature for entropy solutions. The relationship
between $\bw(x,t)$ in Proposition~\ref{prop0} and the entropy solution
is addressed in Section~\ref{sec_3} for a family of conservation laws
with a convex flux functions. In general, we call $\bu (x,t)=\bw_x(x,t)$ the
\emph{minimum value solution} of the conservation law~\eqref{cons_law}
with respect to the Lagrangian $L(x,\alpha)$. Please note that the
minimum value solution is unique for a given Lagrangian at all points
where $\bw(x,t)$ admits the second order derivatives. This uniqueness
is due to the fact that the value function of a problem of optimal
control is unique.

The problem of finding the minimum value solution for a conservation
law boils down to solving the optimal control
problem~\eqref{opt_contr}.  Let $\balpha^\ast(x,p)$
be a solution of the following minimization
\begin{equation}
\label{alpha}
\min_{\alpha} H(x,p,\alpha).
\end{equation}
From PMP, an optimal trajectory of the control
problem~\eqref{opt_contr} satisfies the following necessary conditions
\begin{subequations}
\label{PMP0}
\begin{align}
\dot \bx (r) &= \balpha^\ast(\bx(r),\bp(r))\\
\dot \bp (r) &= -\frac{\partial H}{\partial x}\left(\bx(r),\bp(r),\balpha^{*}\left(\bx(r),\bp(r)\right)\right)\\
\bx(s)&=x\\
\bp(T)&=G'(\bx(T))
\end{align}
\end{subequations}
in which the costate, $\bp:[s,T]\to\Re$,
corresponds the gradient of the optimal cost, i.e.,
\[\bp(r)=V_x(\bx(r),r).\]
Because all functions do not explicitly depend on $r$, we can shift
$s$ to $0$ so that the boundary value problem~\eqref{PMP0} is
equivalent to
\begin{subequations}
\label{PMP}
\begin{align}
\dot \bx (r) &= \balpha^\ast(\bx(r),\bp(r))\label{PMPa}\\
\dot \bp (r) &= -\frac{\partial H}{\partial x}\left(\bx(r),\bp(r),\balpha^{*}\left(\bx(r),\bp(r)\right)\right) \label{PMPb}\\
\bx(0)&=x\\
\bp(t)&=G'(\bx(t))
\end{align}
\end{subequations}
where $t=T-s$. If $G'(x)$ does not exist at $a_k$, for
$k=1,2,\ldots,m$, an optimal trajectory may satisfy the following
equations
\begin{subequations}
\label{PMPbd}
\begin{align}
\dot \bx (r) &= \balpha^\ast(\bx(r),\bp(r))\label{PMPa2}\\
\dot \bp (r) &= -\frac{\partial H}{\partial x}\left(\bx(r),\bp(r),\balpha^{*}\left(\bx(r),\bp(r)\right)\right)\label{PMPb2}\\
\bx(0)&=x\\
\bx(t)&=a_k
\end{align}
\end{subequations}
for $k=1,2,\ldots, m$. Both~\eqref{PMP} and~\eqref{PMPbd} are
two-point boundary value problems (BVPs). If an optimal control problem
can be derived in which the Lagrangian $L(x,\alpha)$ and its
associated Hamiltonian that satisfies~\eqref{F2H}, then the following
is a high-level algorithm for finding the minimum value solution based
on the PMP\@.  Note that this algorithm finds both the minimum value
solution to the conservation law~\eqref{cons_law} and the solution to
the associated HJ equation~\eqref{HJ}.

\begin{algorithm}[Minimum value solution]
\label{minimum_value_solution}
\begin{description}[leftmargin=8em,labelindent=3em,style=nextline,itemsep=1ex]
  \item[\textbf{Step I}] For any given point $(x, t)$, find all
    solutions of the two-point BVPs~\eqref{PMP} and~\eqref{PMPbd}.
  \item[\textbf{Step II}] Among all the solutions in Step I, adopt the
    one, $(\bx,\bp)$, with the smallest cost,
    \[J_{x,t}^\ast = \int_{0}^{t} L\left(\bx(r),\balpha^{\ast}(\bx(r),\bp(r))\right)\;dr + G(\bx(t)).\]
  \item[\textbf{Step III}] Set \[\bw(x,t)=V(x,T-t)=J_{x,t}^\ast, \;\; \bu(x,t)=\bp(0).\]
\end{description}
\end{algorithm}

For the rest of the paper, we address the following questions that are
important to the algorithm.
\begin{itemize}
\item What is the relationship between the minimum value solution and
  the entropy solution?
\item How to find a Lagrangian $L(x,\alpha)$ that satisfies~\eqref{F2H}?
\item How to find all solutions of the two-point BVPs~\eqref{PMP}
  and~\eqref{PMPbd}?
\end{itemize}
In addition, several examples are shown in the following sections to
test the algorithm for conservation laws with various types of flux
functions.

\section{From viscosity solutions to entropy solutions}
\label{sec_3}
This section describes the connection between the minimum value
solution and the entropy for space-independent flux functions.
The following assumptions are made in several theorems that follow.
\begin{assumption}
\label{ass1}
The initial condition function $\bg (x)$, in the conservation
law~\eqref{cons_law}, is bounded in $\Re$ and it is continuous
everywhere except for a finite number of points,
$x=a_1,a_2, \ldots, a_m$.
\end{assumption}

\begin{assumption}
\label{ass2}
In the conservation law~\eqref{cons_law}, the flux function
$F(x,p)=F(p)$ is independent of $x$. In addition, $F$ is a
convex function satisfying
\begin{equation}
\label{F_nonlinear}
\lim_{|p|\to \infty} \frac{F(p)}{|p|} = +\infty.
\end{equation}
\end{assumption}

\begin{definition}
Given a function $F:\Re\to\Re$, its Legendre transform is
\[F^\ast (q)=\sup_{p} \left\{ pq-F(p)\right\} \quad \text{for } q\in \Re.\]
\end{definition}
For convex functions satisfying~\eqref{F_nonlinear}, the `$\sup$' in
the definition can be replaced by `$\max$.'
The following lemma formalizes when the Legendre transform of a convex
function is a convex function and when the transform is an involution.
\begin{lemma}[Convex duality~\cite{evans}]
\label{lm1}
Assume $F(p)$ satisfies Assumption~\ref{ass2}. Then
\begin{itemize}
  \item[(i)] the mapping $q\to F^\ast(q)$ is convex and
    \[\lim_{|q|\to \infty} \frac{F^\ast(q)}{|q|} = +\infty.\]
  \item[(ii)] Moreover \[F={(F^\ast)}^\ast.\]
\end{itemize}
\end{lemma}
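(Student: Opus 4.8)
The plan is to treat the two parts separately, relying on the structure of $F^\ast$ as a supremum of affine functions for part (i) and on the existence of supporting lines for convex functions for part (ii). Throughout, note first that Assumption~\ref{ass2} guarantees $F^\ast(q)$ is finite for every $q$: since $F(p)/|p|\to+\infty$, the quantity $pq-F(p)\to-\infty$ as $|p|\to\infty$, so the supremum is attained at a finite $p$ and $F^\ast:\Re\to\Re$ is well defined (this is also what justifies replacing $\sup$ by $\max$ in the definition).

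For the convexity in (i), I would observe that for each fixed $p$ the map $q\mapsto pq-F(p)$ is affine, hence convex, and that a pointwise supremum of convex functions is convex. Concretely, for $q_1,q_2\in\Re$ and $\lambda\in[0,1]$, the identity $p(\lambda q_1+(1-\lambda)q_2)-F(p)=\lambda(pq_1-F(p))+(1-\lambda)(pq_2-F(p))$ bounds its left side by $\lambda F^\ast(q_1)+(1-\lambda)F^\ast(q_2)$ for every $p$; taking the supremum over $p$ yields convexity of $F^\ast$. For the superlinear growth I would exploit the definition as a lower bound: fixing any $\lambda>0$ and choosing the test point $p=\lambda\,\mathrm{sgn}(q)$ gives $F^\ast(q)\ge \lambda|q|-M_\lambda$, where $M_\lambda=\max\{F(\lambda),F(-\lambda)\}$ is finite by continuity of the convex function $F$. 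Dividing by $|q|$ and letting $|q|\to\infty$ shows $\liminf F^\ast(q)/|q|\ge\lambda$; since $\lambda$ is arbitrary, the limit is $+\infty$.

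For the involution (ii), I would first record Young's inequality $pq\le F(p)+F^\ast(q)$, which is immediate from the definition of $F^\ast$. One direction follows at once: from $pq-F^\ast(q)\le F(p)$ and the definition of the biconjugate, $(F^\ast)^\ast(p)=\sup_q\{pq-F^\ast(q)\}\le F(p)$. The reverse inequality is where the real work lies. Here I would use that a finite convex function on $\Re$ admits a supporting line at every point: given $p_0$, there is $q_0$ with $F(p)\ge F(p_0)+q_0(p-p_0)$ for all $p$. Rearranging shows $q_0 p-F(p)\le q_0 p_0-F(p_0)$ for all $p$, so the supremum defining $F^\ast(q_0)$ is attained at $p_0$ and equals $q_0 p_0-F(p_0)$. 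Feeding this into the definition of the biconjugate gives $(F^\ast)^\ast(p_0)\ge p_0 q_0-F^\ast(q_0)=F(p_0)$, and combining the two inequalities yields $(F^\ast)^\ast=F$.

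The main obstacle is the reverse inequality in (ii), specifically the existence of the supporting line $q_0$ at each point. This is exactly where convexity is indispensable: without it the biconjugate $(F^\ast)^\ast$ would only recover the convex envelope of $F$ rather than $F$ itself. For a convex function on the real line the supporting line is standard (the left and right derivatives exist everywhere and any value between them serves as $q_0$), so I would invoke the convexity hypothesis of Assumption~\ref{ass2} and cite the corresponding treatment in~\cite{evans}.
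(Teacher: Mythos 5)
Your proof is correct. The paper itself gives no proof of Lemma~\ref{lm1}; it is quoted from \citet{evans}, and your argument---convexity of $F^\ast$ as a pointwise supremum of affine maps together with the test-point bound $F^\ast(q)\ge\lambda|q|-M_\lambda$ for part (i), and Young's inequality plus a supporting line at each point for part (ii)---is essentially the same standard argument found in that reference, so there is nothing to fault.
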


In this section, we consider the conservation law
\begin{subequations}
\label{cons_law2}
\begin{alignat}{2}
  \bu_t+{(F(\bu))}_x&=0   && \quad\text{in }\Re\times(0,T)\\
                 \bu&=\bg && \quad\text{on }\Re\times\{t=0\},
\end{alignat}
\end{subequations}
in which $\bg$ and $F$ satisfy Assumptions~\ref{ass1}
and~\ref{ass2}. The associated HJ equation has the form
\begin{subequations}
\label{HJ2}
\begin{alignat}{2}
  \bw_t+F(\bw_x)&=0   &&\quad\text{in }\Re\times(0,T)\\
             \bw&=G   &&\quad\text{on }\Re\times\{t=0\}\\
              G'&=\bg &&\quad\text{almost everywhere in }\Re.
\end{alignat}
\end{subequations}
Following the idea in Section~\ref{sec_2}, the Hamiltonian of the
optimal control problem must satisfy~\eqref{F2H}.  Let
$F\circ(-1):\Re\to\Re$ represent the function
\[p \to F(-p).\]
Define the function $L:\Re\to\Re$ by
\begin{equation}
\label{F2L}
L(\alpha)={(F\circ (-1))}^\ast(\alpha)
         =\max_{p} \left\{ p\alpha -F(-p)\right\}.
\end{equation}
Given the initial condition, $(x,s)\in\Re\times[0,T]$,
the associate problem of optimal control has the form
\begin{subequations}
\label{opt_contr2}
\begin{equation}
\min_{\balpha} J_{x,s}[\balpha] = \min_{\balpha} \int_{s}^{T} L(\balpha(r))\;dr +G(\bx(T))
\end{equation}
subject to
\begin{equation}
\dot \bx(r)=\balpha (r), \quad \bx(s)=x.
\end{equation}
\end{subequations}
The Hamiltonian is independent of $x$, specifically
\[H(p,\alpha)=p\alpha+L(\alpha).\]
The following lemma shows that the requirement~\eqref{F2H} is fulfilled.
\begin{lemma}
\label{lm2}
Suppose Assumption~\ref{ass2} holds. Then
\[F(p)=-\min_{\alpha}H(p,\alpha).\]
\end{lemma}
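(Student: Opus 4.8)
The plan is to recognize the right-hand side $-\min_\alpha H(p,\alpha)$ as itself a Legendre transform and then to reduce the claim to the involution property established in Lemma~\ref{lm1}(ii). The only genuine content is sign bookkeeping together with a check that the reflected flux satisfies the hypotheses under which convex duality holds.

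First I would rewrite the right-hand side. Using $H(p,\alpha)=p\alpha+L(\alpha)$,
\[
-\min_\alpha H(p,\alpha)=\max_\alpha\bigl\{-p\alpha-L(\alpha)\bigr\}=\max_\alpha\bigl\{(-p)\alpha-L(\alpha)\bigr\}=L^\ast(-p),
\]
so the statement is equivalent to the identity $L^\ast(-p)=F(p)$. Since by definition~\eqref{F2L} we have $L=(F\circ(-1))^\ast$, this reads $\bigl((F\circ(-1))^\ast\bigr)^\ast(-p)=F(p)$.

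Next I would verify that the reflected function $F\circ(-1)$, i.e.\ the map $p\mapsto F(-p)$, satisfies Assumption~\ref{ass2}, so that Lemma~\ref{lm1} applies to it. Convexity is preserved under composition with the affine reflection $p\mapsto -p$, and the growth condition transfers because $\lim_{|p|\to\infty}F(-p)/|p|=\lim_{|q|\to\infty}F(q)/|q|=+\infty$. Lemma~\ref{lm1}(ii), applied with $F\circ(-1)$ in place of $F$, then gives $\bigl((F\circ(-1))^\ast\bigr)^\ast=F\circ(-1)$, that is, $L^\ast(q)=F(-q)$ for all $q$. Setting $q=-p$ yields $L^\ast(-p)=F(p)$, which combined with the first step proves the lemma. (Lemma~\ref{lm1}(i) applied to the reflected flux additionally shows $L$ is convex and superlinear, so the extremum defining $\min_\alpha H(p,\alpha)$ is in fact attained, justifying writing `$\min$' rather than `$\inf$.')

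The main obstacle is merely the verification that $F\circ(-1)$ inherits Assumption~\ref{ass2}; once convex duality is available for the reflected flux, the result is immediate, and the remaining work is purely tracking the reflection through the transform.
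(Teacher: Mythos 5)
Your proof is correct and takes essentially the same route as the paper's: both arguments reduce the claim to the identity $F\circ(-1)=L^\ast$ furnished by the involution in Lemma~\ref{lm1} and then unwind the definition of the Legendre transform at $-p$, with the same sign bookkeeping. The only difference is that you explicitly verify that $F\circ(-1)$ inherits Assumption~\ref{ass2} (and that the minimum is attained), points the paper's proof leaves implicit.
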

\begin{proof} From Lemma~\ref{lm1}, we have
\[F\circ (-1) = L^\ast.\]
More specifically, for any $p\in \Re$
\[
\begin{split}
  F(p) &= L^\ast (-p)\\
       &=  \max_{\alpha} \left\{ (-p)\alpha -L(\alpha)\right\}\\
       &= -\min_{\alpha} \left\{   p \alpha +L(\alpha)\right\}\\
       &= -\min_{\alpha} H(p,\alpha).
\end{split}
\]
\qed{}
\end{proof}

The following lemma will be used to simplify the form of the dynamics
given in the PMP\@.
\begin{lemma}
\label{lm3}
Suppose $F \in C^1(\Re)$ satisfies Assumption~\ref{ass2}. Then the function
\begin{equation*}
\label{alpha3}
\balpha^\ast (p)=-F^\prime (p), \; \;\; p\in  \Re
\end{equation*}
is the unique solution of the following minimization
\begin{equation*}
\label{alpha2}
\min_{\alpha}H(p,\alpha).
\end{equation*}
\end{lemma}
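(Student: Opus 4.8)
The plan is to exploit the Legendre--Fenchel structure that is already in place. Write $g=F\circ(-1)$ for the convex function $p\mapsto F(-p)$, so that $L=g^\ast$ by~\eqref{F2L}. Since $g$ is convex and inherits the superlinear growth~\eqref{F_nonlinear}, Lemma~\ref{lm1}(i) applied to $g$ shows that $L$ is a finite, convex function on $\Re$ with $L(\alpha)/|\alpha|\to\infty$ as $|\alpha|\to\infty$. Consequently, for each fixed $p$ the map $\alpha\mapsto H(p,\alpha)=p\alpha+L(\alpha)$ is convex and coercive, so it attains its minimum, and a point $\balpha^\ast$ is a minimizer if and only if it satisfies the first-order optimality condition $0\in p+\partial L(\balpha^\ast)$, that is, $-p\in\partial L(\balpha^\ast)$.

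Next I would convert this condition back to $g$ using the Fenchel subgradient inversion: for a convex $g$ with conjugate $L=g^\ast$ one has $-p\in\partial L(\balpha^\ast)$ if and only if $\balpha^\ast\in\partial g(-p)$, both being equivalent to equality in the Fenchel--Young inequality. Now the hypothesis $F\in C^1(\Re)$ enters decisively: it forces $g\in C^1(\Re)$ with $g'(y)=-F'(-y)$, so the subdifferential $\partial g(-p)=\{g'(-p)\}=\{-F'(p)\}$ is a single point. Therefore the optimality condition has exactly one solution, namely $\balpha^\ast=-F'(p)$, which simultaneously identifies the minimizer and yields its uniqueness.

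The main obstacle is precisely the uniqueness. Under Assumption~\ref{ass2}, $F$ is convex but need not be strictly convex, so $L=g^\ast$ need not be strictly convex and a priori the minimum of $H(p,\cdot)$ could be attained on a whole interval; it is the $C^1$ hypothesis that rescues uniqueness, since differentiability of $g$ is exactly what makes $\partial g(-p)$ single-valued. As an alternative route that sidesteps subgradients, one can verify the claim directly: substituting $\alpha=-F'(p)$ and taking $q=-p$ as the (concave) maximizer in~\eqref{F2L} gives $L(-F'(p))=pF'(p)-F(p)$, whence $H(p,-F'(p))=-F(p)$, which by Lemma~\ref{lm2} equals $\min_\alpha H(p,\alpha)$; convexity together with the smoothness of $g$ then shows no other minimizer exists.
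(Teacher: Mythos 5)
Your proof is correct, but it follows a genuinely different route from the paper's. You work entirely in the language of convex analysis: you characterize the minimizers of $H(p,\cdot)$ by the first-order condition $-p\in\partial L(\balpha^\ast)$, invert it via Fenchel--Young equality to $\balpha^\ast\in\partial g(-p)$ with $g=F\circ(-1)$, and then let the $C^1$ hypothesis collapse $\partial g(-p)$ to the singleton $\{-F'(p)\}$, which delivers existence, the formula, and uniqueness in one stroke. The paper instead argues elementarily: it parametrizes candidate minimizers as $\alpha=-F'(-\lambda)$ (using that $F'$ is monotone, continuous, and unbounded, hence onto $\Re$), defines $\bar H(p,\lambda)=H(p,-F'(-\lambda))$, shows via the mean value theorem and monotonicity of $F'$ that $\bar H(p,\lambda)\geq -F(p)=\bar H(p,-p)$, and then proves uniqueness by a tangent-line rigidity argument: if another $\lambda_1$ attained the minimum, $F$ would coincide with its tangent line at $-\lambda_1$ on the whole interval up to $p$, forcing $F'(-\lambda_1)=F'(p)$ and hence the same $\alpha$. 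Both proofs correctly locate the crux in the same place --- since $F$ is convex but not strictly convex, $L$ need not be strictly convex, and it is the smoothness of $F$ (single-valuedness of $\partial g$, respectively flatness of $F$ on the degenerate segment) that rescues uniqueness; you flag this explicitly, which is a merit. Your approach is shorter, pinpoints exactly where $C^1$ enters, and generalizes immediately (for nonsmooth $F$ the minimizer set is exactly $-\partial F(p)$), at the cost of importing standard subdifferential calculus (sum rule, biconjugacy, Fenchel--Young inversion) that the paper never develops; the paper's proof is longer but self-contained, using only the mean value theorem and the tangent-line property, and it produces along the way the identity $L(-F'(p))=pF'(p)-F(p)$ that Section~\ref{sec_4} reuses. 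One caution: your closing ``alternative route'' is not by itself a proof --- verifying that $\alpha=-F'(p)$ achieves the value $-F(p)=\min_\alpha H(p,\alpha)$ settles optimality but not uniqueness, and the phrase ``convexity together with smoothness of $g$ shows no other minimizer exists'' is precisely the nontrivial step your main subdifferential argument (or the paper's tangent-line argument) is needed to supply.
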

\begin{proof}
From the definition~\eqref{F2L},
\[L(\alpha) = \max_{\lambda}\left\{ \lambda\alpha - F(-\lambda)\right\}\]
for all $\alpha\in\Re$. Equivalently
\[L(\alpha) = \lambda\alpha - F(-\lambda)\] where $\lambda$ is a number
satisfying
\begin{equation}
\label{ad1_v1}
\alpha + F^\prime (-\lambda)=0.
\end{equation}
For a given $\alpha$, the value of $\lambda$ satisfying (\ref{ad1_v1}) may not
be unique. However, the minimum value, $L(\alpha)$, is unique because $
\lambda\alpha - F(-\lambda)$ is convex. Define 
\[
\begin{split}
\bar H(p,\lambda) &= H(p, -F^\prime (\lambda)) \\
&=-(p+\lambda)F^\prime (-\lambda)-F(-\lambda).
\end{split}
 \]
Assumption~\ref{ass2} implies that  $F^\prime (-\lambda)$ is monotone and
unbounded. Therefore, $\alpha$ minimizes $H(p,\alpha)$ if and only if there is a
number $\lambda$ satisfying (\ref{ad1_v1}) that minimizes $\bar H(p,\lambda)$. To
minimize $\bar H(p,\lambda)$, we consider
\[
\begin{split}
\bar H(p,\lambda) & = -(p+\lambda)F^\prime (-\lambda)-F(-\lambda) +F(p) - F(p)\\
&= -(p+\lambda)F^\prime (-\lambda)+(p+\lambda)F^\prime (\xi)- F(p)\\
&=-(p+\lambda)(F^\prime (-\lambda)-F^\prime (\xi))-F(p)
\end{split}
\]
where $\xi$ is a number between $p$ and $-\lambda$. If $p+\lambda >0$, then $p>
\xi >-\lambda$. We know that $F^\prime (\cdot)$ is nondecreasing. Therefore,
\[ -(p+\lambda)(F^\prime (-\lambda)-F^\prime(\xi)) \geq 0\]
Similarly, we can prove the same inequality if $p+\lambda < 0$. Therefore,
\[\bar H(p,\lambda ) \geq -F(p) = \bar H(p,-p),\]
i.e., $\lambda = -p$ minimizes $\bar H(p,\lambda)$. Its minimum value is $-F(p)$.
Therefore, $\alpha^\ast=-F^\prime (p)$ minimizes $H(p,\alpha)$.

To prove that $\alpha^\ast=-F^\prime (p)$ is the unique function that minimizes
$H(p,\alpha)$, let us assume that $\alpha_1=-F^\prime(\lambda_1)$ minimizes
$H(p,\alpha)$ and $\bar H(p,\lambda)$. Then
\[ \bar H(p,\lambda_1) = -F(p),\]
i.e.,
\[-(p+\lambda_1)F^\prime (-\lambda_1)-F(-\lambda_1) + F(p)=0.\]
Equivalently,
\[F(p)=F(-\lambda_1) +(p+\lambda_1)F^\prime (-\lambda_1).\]
Because $F(\xi)$ is convex, its curve cannot lie below its tangent line. Therefore,
\[F(\xi)=F(-\lambda_1) +(\xi+\lambda_1)F^\prime (-\lambda_1)\]
for all $\xi$ between $\lambda_1$ and $p$. So,
\[ F^\prime (\xi) = F^\prime(-\lambda_1).\]
Let $\xi = p$, then $-F^\prime (p) = -F^\prime (\lambda_1) =\alpha_1 $. This
implies that $\alpha^\ast = -F^\prime (p)$ is the unique function that minimizes
$H(p,\alpha)$.
\qed{}
\end{proof}

Using Lemma~\ref{lm3}, the Hamilton
dynamics~\eqref{PMPa}--\eqref{PMPb} (and~\eqref{PMPa2}--\eqref{PMPb2})
for the control problem~\eqref{opt_contr2} is simplified to
\begin{subequations}
\label{PMP2}
\begin{align}
\dot \bx (r) &= \balpha^\ast(\bp(r))=-F^\prime (\bp(r))\\
\dot \bp (r) &= 0.
\end{align}
\end{subequations}
This implies that $\bp$ is a constant and the characteristic is a
straight line
\begin{alignat*}{2}
\bp (r) &\equiv p  &&\quad \text{(a constant)}\\
\bx (r) &= x-F^\prime (p) r.
\end{alignat*}
For this to be a solution of one of the two-point BVPs~\eqref{PMP}
and~\eqref{PMPbd}, $p$ must satisfy at least one of the following
equations
\begin{subequations}
\label{characteristic}
\begin{alignat}{2}
  p&=G^\prime (x-F^\prime (p)t)  \\
a_k&=x-F^\prime (p)t &&\quad \text{ for } 1\leq k\leq m,
\end{alignat}
\end{subequations}
where $a_k$'s are the points at which $G^\prime (x)$, or $\bg (x)$, is
discontinuous. Thanks to Assumption~\ref{ass2}, the two-point boundary
value problem (BVP) of
PMP boils down to the algebraic equations~\eqref{characteristic}. This
is fundamentally different from the PMP in Section~\ref{sec_2}, where
the problem is defined using differential equations. Under some
conditions which will be addressed later, all solutions of the
algebraic equations~\eqref{characteristic} can be found.
\begin{remark}
From the Hamiltonian dynamics~\eqref{PMP2}, the optimal trajectory is
a line \[x+\alpha r\] where $\alpha = \balpha (p)$ is a constant. The
problem of optimal control~\eqref{opt_contr2} is equivalent to
\begin{equation}
\label{eqtmp}
\min_{\alpha}\left\{ L(\alpha)t+G(x+\alpha t)\right\}.
\end{equation}
Define
\[y=x+\alpha t\]
then minimization problem~\eqref{eqtmp}, for $t>0$, is transformed to
\[\min_{y}\left\{tL\left(\frac{y-x}{t}\right)+G(y)\right\}.\]
This is the Hopf--Lax formula~\cite{evans}.
\end{remark}

Now, let us address the issue of discontinuity in solutions. Instead
of a classic smooth solution, we consider the entropy solution of
the conservation law~\eqref{cons_law2}. The problem is closely related
to the viscosity solution of associated HJ equation. Suppose that the
control input of the optimal control problem~\eqref{opt_contr2} is
bounded and measurable, i.e., the set of possible control inputs is
\begin{equation}
\label{inputspace}
\Set*{ \balpha: [0, T] \rightarrow [A_1, A_2] \given \balpha
\text{ is measurable}}.
\end{equation}
\begin{theorem}[\cite{evans}]
\label{thm1}
Suppose $\bg$ satisfies Assumption~\ref{ass1} and $G$
is bounded. Further, suppose $F \in C^1(\Re)$ satisfies
Assumption~\ref{ass2}. Let $V(x,s)$ be the value function of the
optimal control problem~\eqref{opt_contr2} with bounded control
inputs and define \[\bw(x,t) = V(x,T-t).\]
Then, $\bw(x,t)$ is the viscosity solution of the initial value
problem~\eqref{HJ2}.
\end{theorem}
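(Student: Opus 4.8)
The plan is to follow the standard control-theoretic route of \citet[Section~10.3]{evans}: show that the value function $V$ is the (unique) viscosity solution of the terminal-value Hamilton--Jacobi--Bellman problem associated with~\eqref{opt_contr2}, and then transfer this to the forward equation~\eqref{HJ2} via the change of variables $\bw(x,t)=V(x,T-t)$ together with the identity $F(p)=-\min_\alpha H(p,\alpha)$ from Lemma~\ref{lm2}. Throughout I would use that $G$ is Lipschitz (its almost-everywhere derivative $\bg$ is bounded by Assumption~\ref{ass1}) and that $L$ is continuous, convex, and superlinear by Lemma~\ref{lm1}.

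First I would record the elementary regularity of $V$. Because $G$ is bounded, $L$ is continuous, and the admissible controls are confined to the compact interval $[A_1,A_2]$ in~\eqref{inputspace}, the cost $J_{x,s}[\balpha]$ is uniformly bounded; since the trajectories $\dot\bx=\balpha$ have speed at most $\max(|A_1|,|A_2|)$, a direct estimate shows $V$ is bounded and Lipschitz in $(x,s)$, and an optimal control exists by lower semicontinuity. Next I would prove the dynamic programming principle: for $s\le\hat s\le T$,
\[
V(x,s)=\inf_{\balpha}\left\{\int_{s}^{\hat s} L(\balpha(r))\,dr + V(\bx(\hat s),\hat s)\right\}.
\]

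With the dynamic programming principle in hand, the core of the argument is the verification of the two viscosity inequalities. For the supersolution property, given a smooth $\phi$ such that $V-\phi$ has a local minimum at $(x_0,s_0)$, I would insert a constant control $\balpha\equiv\alpha$ into the principle, subtract, divide by $\hat s-s_0$, and let $\hat s\downarrow s_0$ to obtain $\phi_s(x_0,s_0)+H(\phi_x(x_0,s_0),\alpha)\ge 0$ for every admissible $\alpha$; taking the infimum over $\alpha$ yields the required inequality. For the subsolution property, given $\phi$ with $V-\phi$ having a local maximum, I would feed near-optimal controls into the principle and pass to the limit, using continuity of $L$ and of the trajectories. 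Finally, since $\bw(x,t)=V(x,T-t)$ gives $\bw_t=-V_s$ and $\bw_x=V_x$, and $-\min_\alpha H=F$ by Lemma~\ref{lm2}, the time reversal interchanges the super- and subsolution inequalities with precisely the signs needed for $\bw$ to be the viscosity solution of~\eqref{HJ2}; uniqueness is then the comparison principle for~\eqref{HJ2}. (Alternatively, one may identify $\bw$ with the Hopf--Lax formula of the Remark above and invoke the existence/uniqueness theorem of \citet[Section~3.3]{evans} directly.)

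The step I expect to be the main obstacle is reconciling the \emph{bounded} control set $[A_1,A_2]$ with recovery of the \emph{unconstrained} flux $F$ appearing in~\eqref{HJ2}. The constrained minimization gives $\min_{\alpha\in[A_1,A_2]}H(p,\alpha)=-F(p)$ only when the unique unconstrained minimizer $\alpha^\ast=-F'(p)$ of Lemma~\ref{lm3} lies in $[A_1,A_2]$. Here Assumption~\ref{ass2} and the boundedness of $\bg$ must be used together: the relevant costates $p$ stay in a bounded set governed by $\sup|\bg|$, so for $[A_1,A_2]$ chosen large enough the constraint is never active along any relevant optimal trajectory and the constrained Hamiltonian coincides with $-F$. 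Pinning down this choice of $[A_1,A_2]$ and checking that the minimizer remains interior is the delicate point; once it is settled, the dynamic programming principle and the viscosity verifications are routine.
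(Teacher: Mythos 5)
Some context first: the paper does not prove this theorem at all --- it is quoted from \citet[Section~10.3]{evans} --- and the only ingredient the paper adds is the discussion that follows, where Proposition~\ref{prop1} resolves precisely the bounded-control issue you flag in your final paragraph. So your overall route (dynamic programming principle, verification of the two viscosity inequalities, time reversal via Lemma~\ref{lm2}, then reconciling the bounded control set~\eqref{inputspace} with the unconstrained flux) is the right reconstruction of the argument behind the citation, and your last paragraph is exactly the content of the paper's Proposition~\ref{prop1}, which the paper in fact proves from the superlinearity of $L$ and the Lipschitz constant of $G$ alone, without needing $F\in C^1$. Your parenthetical alternative (identify $\bw$ with the Hopf--Lax formula and cite the existence/uniqueness theory directly) is, if anything, closer to what Evans actually does for this space-independent setting.

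However, the core verification step, as you describe it, would fail: you have paired the test-function extrema with the wrong dynamic-programming inequalities. The principle gives, for an \emph{arbitrary} constant control $\alpha$,
\[
V(x_0,s_0)\le(\hat s-s_0)L(\alpha)+V\bigl(\bx(\hat s),\hat s\bigr),
\]
i.e.\ a \emph{lower} bound on the increment $V(\bx(\hat s),\hat s)-V(x_0,s_0)$. To convert this into a statement about a smooth $\phi$ you need the increment of $\phi$ to dominate the increment of $V$, that is $(V-\phi)(\bx(\hat s),\hat s)\le(V-\phi)(x_0,s_0)$, which is what a local \emph{maximum} of $V-\phi$ at $(x_0,s_0)$ provides; at a local minimum the two inequalities point in incompatible directions ($a\le b$ and $b\ge c$ give no relation between $a$ and $c$) and nothing can be concluded. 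Symmetrically, the $\epsilon$-optimal-control inequality is an \emph{upper} bound on the increment of $V$ and pairs with a local \emph{minimum}. The correct pairing is therefore: local maximum of $V-\phi$ plus constant controls yields $\phi_s+H(\phi_x,\alpha)\ge0$ for every $\alpha$, hence $\phi_s+\min_\alpha H\ge0$; local minimum plus near-optimal controls (using compactness of $[A_1,A_2]$ to pass to the limit) yields $\phi_s+\min_\alpha H\le0$. This is Evans' own proof, and after the substitution $\bw(x,t)=V(x,T-t)$ these become, respectively, the subsolution and supersolution inequalities for the initial-value problem~\eqref{HJ2}. The fix is simply to swap your two cases; everything else in your outline stands.
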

It is proved by \citet{corrias} that the entropy solution of a
conservation law is the gradient of the viscosity solution of the HJ
equation.
\begin{theorem}[\cite{corrias}]
\label{thm2}
Suppose $\bg$ satisfies Assumption~\ref{ass1}. Suppose
$F\in C^1(\Re)$ satisfies Assumption~\ref{ass2}. If
$\bw \in W^{1,\infty}(\Re \times (0, T])$ is the (unique)
viscosity solution of the HJ equation~\eqref{HJ2}, then $\bu = \bw_x$
is the (unique) entropy solution of the conservation
law~\eqref{cons_law2}.
\end{theorem}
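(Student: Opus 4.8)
The plan is to establish the identity $\bu=\bw_x$ by the vanishing viscosity method, exploiting the fact that the spatial differentiation which formally connects~\eqref{HJ2} and~\eqref{cons_law2} is preserved \emph{exactly} at the regularized level. First I would introduce the viscous Hamilton--Jacobi equation
\[
\bw^\epsilon_t + F(\bw^\epsilon_x) = \epsilon\,\bw^\epsilon_{xx}, \qquad \bw^\epsilon(\cdot,0)=G^\epsilon,
\]
with $G^\epsilon$ a smooth approximation of $G$, and differentiate it once in $x$. Writing $\bu^\epsilon=\bw^\epsilon_x$ and using $F'(\bu^\epsilon)\,\bu^\epsilon_x=(F(\bu^\epsilon))_x$, this yields
\[
\bu^\epsilon_t + (F(\bu^\epsilon))_x = \epsilon\,\bu^\epsilon_{xx}, \qquad \bu^\epsilon(\cdot,0)=(G^\epsilon)'\to\bg,
\]
so that, by uniqueness of the smooth viscous problem, $\bw^\epsilon_x$ is precisely the viscous approximation of the conservation law~\eqref{cons_law2}.

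The second step is to pass to the limit $\epsilon\to 0^+$ in both problems at once. Standard viscosity-solution theory (as already invoked in Theorem~\ref{thm1}) gives $\bw^\epsilon\to\bw$ locally uniformly, with $\bw$ the unique viscosity solution in $W^{1,\infty}$. On the conservation-law side, the scalar structure supplies an a priori total-variation and $L^\infty$ bound on $\bu^\epsilon$ uniform in $\epsilon$; Helly's theorem then yields strong $L^1_{\mathrm{loc}}$ compactness, and the Kruzhkov entropy inequalities pass to the limit, so $\bu^\epsilon\to\bu$ with $\bu$ the unique entropy solution. Combining the uniform convergence $\bw^\epsilon\to\bw$ with the $L^1_{\mathrm{loc}}$ convergence $\bw^\epsilon_x=\bu^\epsilon\to\bu$ lets me identify the distributional derivative, giving $\bw_x=\bu$ almost everywhere.

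The main obstacle is exactly this identification of limits: one must guarantee that the limit of the spatial derivatives $\bw^\epsilon_x$ is genuinely the distributional derivative $\bw_x$ of the limit $\bw$, and that this common limit is simultaneously the viscosity solution's gradient and the entropy solution. Since uniform convergence of $\bw^\epsilon$ controls $\bw$ only in $C^0$, the derivative identification has to be routed through the compactness of $\{\bu^\epsilon\}$; the convexity hypothesis in Assumption~\ref{ass2} is what furnishes the one-sided (Oleinik) bounds and ensures the limit selects the entropy solution rather than a spurious weak solution. An alternative, more explicit route bypasses the viscous regularization entirely: starting from the Hopf--Lax representation of $\bw$ already recorded in the remark above, one differentiates in $x$ through the envelope theorem to obtain the Lax--Oleinik formula for $\bw_x$, and then verifies directly that this expression satisfies the Rankine--Hugoniot and Oleinik entropy conditions across shocks. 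Here the delicate point is controlling the set where the Hopf--Lax minimizer $y^\ast(x,t)$ fails to be unique, which is exactly the shock set where $\bw_x$ jumps.
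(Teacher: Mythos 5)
The paper does not actually prove this statement: Theorem~\ref{thm2} is quoted directly from \cite{corrias}, with no internal proof to compare against. Your vanishing-viscosity argument is, in essence, the standard route by which this identity is established in the literature that the citation rests on: regularize, observe that the spatial derivative $\bw^\epsilon_x$ solves the viscous conservation law \emph{exactly}, pass to the limit on both sides, and identify $\bw_x=\bu$ distributionally by combining the locally uniform convergence of $\bw^\epsilon$ with the $L^1_{\mathrm{loc}}$ convergence of $\bw^\epsilon_x$. That identification step, which you correctly single out as the crux, is handled properly: testing $\int \bw^\epsilon \varphi_x = -\int \bw^\epsilon_x \varphi$ against the two modes of convergence does the job.

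Two details in your write-up need repair, however. First, your compactness mechanism is not available under the stated hypotheses: Assumption~\ref{ass1} requires $\bg$ only to be bounded and continuous off finitely many points, which does \emph{not} imply bounded variation (consider a bounded version of $x\sin(1/x)$ near the origin), so the claimed ``a priori total-variation bound plus Helly'' step has no basis. The correct tool is Kruzhkov's theory for $L^\infty$ data: approximate $\bg$ in $L^1_{\mathrm{loc}}$ by smooth functions and use the $L^1$-contraction of both the viscous and inviscid scalar problems (together with the comparison principle on the HJ side) to conclude $\bu^\epsilon \to \bu$; no BV bound is needed. Second, your assertion that the convexity in Assumption~\ref{ass2} ``furnishes the one-sided (Oleinik) bounds'' is off: Oleinik's one-sided Lipschitz estimate requires \emph{uniform} convexity, which Assumption~\ref{ass2} deliberately does not impose --- avoiding that hypothesis is precisely the generalization this paper is after. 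Fortunately, the vanishing-viscosity route needs no convexity at all, since the Kruzhkov entropy inequalities pass to the limit for any $C^1$ flux, so that remark should simply be dropped. The same issue undermines your proposed alternative route (differentiating the Hopf--Lax formula and checking the Rankine--Hugoniot and Oleinik conditions across shocks directly): that is the uniformly convex argument of \cite{evans}, and extending it to merely convex $F$ is exactly the difficulty the paper sidesteps by invoking \cite{corrias}.
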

Theorems~\ref{thm1} and~\ref{thm2} almost bridge the HJ equation and
the conservation law except that the control input is required to be
bounded in~\eqref{inputspace}. In fact, it can be guaranteed that
\eqref{inputspace} holds true for the optimal control
problem~\eqref{opt_contr2}.
\begin{proposition}
\label{prop1}
Suppose that Assumptions~\ref{ass1} and~\ref{ass2} hold true. Then the
value of optimal control, $\alpha^\ast$, for the optimal control
problem~\eqref{opt_contr2} as a function of $x$ is bounded in $\Re$.
\end{proposition}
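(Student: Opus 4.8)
The plan is to pass to the finite-dimensional form of the problem. By the Remark following Lemma~\ref{lm3}, for $t>0$ the optimal control is the constant minimizer
\[
\alpha^\ast(x) \in \argmin_{\alpha}\ \Phi_x(\alpha), \qquad \Phi_x(\alpha) := tL(\alpha)+G(x+\alpha t),
\]
so it suffices to show that these minimizers lie in one bounded interval as $x$ ranges over $\Re$. First I would record the two structural facts that drive the argument. Since $p\mapsto F(-p)$ inherits convexity and superlinear growth from $F$ under Assumption~\ref{ass2}, Lemma~\ref{lm1}(i) applied to $F\circ(-1)$ shows that $L=(F\circ(-1))^\ast$ is convex with $L(\alpha)/|\alpha|\to+\infty$ as $|\alpha|\to\infty$; in particular $L(0)$ is a fixed finite number. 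Second, because $G'=\bg$ almost everywhere and $\bg$ is bounded by Assumption~\ref{ass1}, the function $G$ is globally Lipschitz with constant $M:=\sup_{\Re}|\bg|<\infty$.

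Next I would compare $\Phi_x(\alpha)$ with $\Phi_x(0)$ in order to cancel the $x$-dependence. Writing
\[
\Phi_x(\alpha)-\Phi_x(0) = t\bigl(L(\alpha)-L(0)\bigr) + \bigl(G(x+\alpha t)-G(x)\bigr),
\]
the Lipschitz estimate gives $G(x+\alpha t)-G(x)\ge -Mt|\alpha|$ for every $x$, while superlinearity of $L$ lets me fix $R>0$, depending only on $M$ and $L$, with $L(\alpha)\ge (M+1)|\alpha|$ whenever $|\alpha|\ge R$. Combining these yields $\Phi_x(\alpha)-\Phi_x(0)\ge t\bigl(|\alpha|-L(0)\bigr)$ for all $|\alpha|\ge R$, with no residual dependence on $x$ or $t$. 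Hence any $\alpha$ with $|\alpha|>\max\{R,L(0)\}$ satisfies $\Phi_x(\alpha)>\Phi_x(0)$ and cannot be a minimizer, so $|\alpha^\ast(x)|\le \max\{R,L(0)\}=:C$ uniformly in $x\in\Re$ (and indeed in $t>0$). Choosing $[A_1,A_2]\supseteq[-C,C]$ then legitimizes the bounded input space~\eqref{inputspace} that Theorems~\ref{thm1} and~\ref{thm2} require.

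The step I expect to require the most care is keeping the bound genuinely uniform in $x$, and this is exactly what comparing against the basepoint $\alpha=0$ buys: differencing $G$ at $x+\alpha t$ and at $x$ controls it by the Lipschitz constant alone. A tempting alternative is the first-order optimality condition $0\in t\,\partial L(\alpha^\ast)+t\,\partial\bigl(G(x+\alpha t)\bigr)$, which formally forces a selection from $\partial L(\alpha^\ast)$ of modulus at most $M$ and then bounds $\alpha^\ast$ via monotonicity and surjectivity of the subderivative of the superlinear convex $L$; however, this route must contend with the subdifferential sum rule at the finitely many jumps of $\bg$ and with possible nondifferentiability of $L$, all of which the comparison argument sidesteps. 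For intuition one may also read off the two characteristic families in~\eqref{characteristic}: the branch $p=G'(\cdot)$ already forces $|p|\le M$ and hence $|\alpha^\ast|=|F'(p)|\le\sup_{|p|\le M}|F'(p)|<\infty$, whereas the shock branch $F'(p)=(x-a_k)/t$ can produce arbitrarily large speeds but, by the very same cost comparison, is never the one selected in Step~II once $|x|$ is large.
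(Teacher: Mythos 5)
Your proof is correct and takes essentially the same route as the paper's: both compare the cost of the optimal (constant) control against a fixed reference control ($\alpha=0$ in your case, $\alpha=1$ in the paper's), cancel the $x$-dependence using the Lipschitz bound on $G$ inherited from the boundedness of $\bg$, and conclude from the superlinear growth of $L$. The differences are cosmetic --- your choice of basepoint and your explicit bound $\max\{R,L(0)\}$ versus the paper's contradiction-style argument from $L(\alpha^\ast)/|\alpha^\ast|$ remaining bounded.
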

\begin{proof}
The result is trivially true if $F$ is $C^1$. However, it can be
proved without this smoothness assumption. From the relation of the
initial conditions~\eqref{G2g} and Assumption~\ref{ass1}, we know that
$G$ is Lipschitz. Let $C$ be its Lipschitz constant. Given $(x,s)$, the
optimal control is a constant function,
$\balpha^\ast \equiv \alpha^\ast$ for some $\alpha^\ast\in \Re$. The
corresponding optimal cost value is
\[J_{x,s}^\ast=\min_{\alpha} \{ L(\alpha)(T-s) +G(x+(T-s)\alpha)\}.\]
The optimal value is less than or equal to the value at $\alpha=1$, i.e.,
\[L(\alpha^\ast)(T-s) +G(x+(T-s)\alpha^\ast)\leq L(1)(T-s) +G(x+(T-s)).\]
Because $G$ is Lipschitz
\[
  \begin{split}
L(\alpha^\ast)(T-s) +G(x)-C(T-s)|\alpha^\ast|
 &\leq L(\alpha^\ast)(T-s) +G(x+(T-s)\alpha^\ast)\\
 &\leq L(1)(T-s) +G(x+(T-s))\\
 &\leq L(1)(T-s) +G(x)+C(T-s).
\end{split}
\]
Therefore,
\[L(\alpha^\ast)-C|\alpha^\ast|\leq  L(1) +C\]
or equivalently
\[\frac{L(\alpha^\ast)}{|\alpha^\ast|} \leq \frac{L(1)}{|\alpha^\ast|}
  +\frac{C}{|\alpha^\ast|} +C.\]
Therefore, $\alpha^\ast$ must be bounded because of the following
property of $L(\alpha)$
\[\lim_{|\alpha| \rightarrow \infty}
  \frac{L(\alpha)}{|\alpha|}=+\infty.\]
\qed{}
\end{proof}
The following section contains a causality free algorithm for the
computation of $u$ at any point $(x, t)$.

\section{A simplified numerical algorithm}
\label{sec_4}
Let us consider a conservation law satisfying Assumptions~\ref{ass1}
and~\ref{ass2}. It is proved in Section~\ref{sec_3} that its
entropy solution is the same as the minimum value solution with
respect to an associated Lagrangian. In addition, the characteristics
resulting from the PMP are straight lines. In this case,
Algorithm~\ref{minimum_value_solution} can be significantly simplified. In the following, 
\[L={(F\circ (-1))}^\ast.\]
If an explicit expression of this function is not derived, its value in (\ref{cost_alg}) can be computed using the following formula
\[L(-F^\prime(p)) = pF^\prime (p) - F(p).\]
because any $p$ satisfying $\alpha = -F^\prime(p)$ maximizes 
\[ \max_{p} \left\{ -p\alpha -F(p)\right\},\]
which is equivalent to (\ref{F2L}). 

\begin{algorithm}[Entropy solution]
\label{al_entropy_solution}
This algorithm is for conservation laws satisfying
Assumptions~\ref{ass1} and~\ref{ass2}.
\begin{description}[leftmargin=8em,labelindent=3em,style=nextline,itemsep=1ex]
\item[\textbf{Step I}]
Given any point $(x,t)$, find all values of $p$ that satisfy at
least one of the algebraic equations~\eqref{characteristic}, i.e.,
\begin{subequations}
\label{characteristic2}
\begin{alignat}{2}
  p&=\bg (x-F^\prime (p)t) \label{characteristic2a}\\
\intertext{or}
a_k&=x-F^\prime (p)t, &&\quad \text{ for } 1\leq k\leq m. \label{characteristic2b}
\end{alignat}
\end{subequations}
\item[\textbf{Step II}] Among all the solutions in Step I, adopt the one with
the smallest value,
\begin{equation}
  \label{cost_alg}
J^\ast = L(-F^\prime(p))t +G(x-F^\prime (p) t)
\end{equation}
or equivalently
\[J^\ast = (pF^\prime (p) - F(p))t +G(x-F^\prime (p) t)\]
\item[\textbf{Step III}] Set
\[
\bu(x,t)=p.
\]
\end{description}
\end{algorithm}

\begin{remark}
Step~I is critical in this algorithm. Given a general bounded initial
function, $\bg$, the algebraic equations~\eqref{characteristic2} may
have multiple solutions.  In the examples below we use
piecewise-polynomial approximation~\cite{chebfun_piecewise} (and
trigonometric approximation~\cite{chebfun_periodic} for periodic
functions) via Chebfun\footnote{Chebfun is a
  MATLAB~\cite{matlab} package that allows symbolic-like manipulation
  of functions at numerics speed using Chebyshev and Fourier
  series~\cite{chebfun}.}~\cite{chebfun} and its \lstinline|roots|
command to approximate the solutions of the algebraic
equations~\eqref{characteristic2}.
In the piecewise-polynomial case, the Chebfun command
\lstinline|roots| uses the algorithm described in \citet{boyd}.
Basically, it subdivides the interval into small pieces (based on the
number of terms in the approximations) and uses the eigenvalues of the
colleague matrix associated with the approximation on each
interval~\cite{specht,good}.  For further discussion of root finding
using Chebfun see \citet[Chaper~18]{Trefethen2012}.  For the
polynomial and trigonometric approximations used in this paper, we
uses Chebfun's adaptive procedure to find the number of terms
automatically to achieve roughly 15 digits of relative accuracy.
\end{remark}

Here is an implementation of Algorithm~\ref{al_entropy_solution} in
MATLAB~\cite{matlab} using Chebfun.
\begin{implementation}[Entropy solution for convex flux functions]\label{pdeccl}
\begin{lstlisting}
function u = pdeccl(Fp, L, g, G, d, a, x, t)
  %
  cFp = chebfun(Fp, d, 'splitting', 'on'); `\label{es_s1_eq2_begin}`
  q = [];
  for k = 1:length(a)
    hk = a(k) - x + t*cFp;
    q = [q, roots(hk)'];
  end `\label{es_s1_eq2_end}`

  dom = unique([d, q]); `\label{es_s1_eq1_begin}`
  h = chebfun(@(p) g(x - t*Fp(p)) - p, dom, 'splitting', 'on');
  p = [roots(h); q(:)]; `\label{es_s1_eq1_end}`

  %
  J = @(p) L(-Fp(p))*t + G(x-Fp(p)*t); `\label{es_s2_begin}`
  [~, idx] = min(J(p)); `\label{es_s2_end}`

  %
  u = p(idx); `\label{es_s3}`
end
\end{lstlisting}
Here, \lstinline|Fp|, \lstinline|L|, \lstinline|g|, and \lstinline|G|
are the functions $F'$, $L$, $g$, and $G$, respectively.  The array
\lstinline|d| is the domain of definition (i.e., the range of $g$
including breakpoints where $F'$ might not be smooth) that should be
used with Chebfun and the array \lstinline|a| contains the points at
which the initial condition $g$ is discontinuous.  The pair
(\lstinline|x|,\lstinline|t|) is the point $(x,t)$ at which the
solution $u$, returned as \lstinline|u|, of the conservation law is
computed.  If \lstinline|g| is a compactly supported
chebfun\footnote{Chebfun with a capital C is the name of the software
package, chebfun with a lowercase c is a single variable function
defined on an interval created with the package~\cite{chebfun}.}
(e.g., \lstinline|g = chebfun({0,1,0}, [-1,0,1,3])|) and $F'$ is
smooth then \lstinline|G|, \lstinline|d|, and \lstinline|a| can be
computed with:
\begin{lstlisting}[numbers=none,frame=none]
  G = cumsum(g);
  d = minandmax(g)';
  a = g.ends(abs(jump(g, g.ends)) > 10*vscale(g)*eps);
\end{lstlisting}
In this implementation, the Step~I consists of
lines~\ref{es_s1_eq2_begin}--\ref{es_s1_eq2_end} which solve the
algebraic equations~\eqref{characteristic2b} and
lines~\ref{es_s1_eq1_begin}--\ref{es_s1_eq1_end} which find all the
solutions of \eqref{characteristic2a}.
\end{implementation}

This algorithm has several advantages. In the presence of shock waves,
finding the unique entropy solution is simple because the minimum
value solution in Step~II is unique. Furthermore, unlike the
traditional method of characteristics for conservation laws, the
computation does not explicitly use the Rankine--Hugoniot condition to
calculate the behavior of the shock curves.

The algorithm is not based on interpolating and differentiation on a
spacial grid and thus can avoid the Gibbs--Wilbraham phenomenon when
calculating the solution at a point.  We say that the algorithm is
causality free or pointwise, meaning that the value of $u(x,t)$ is
computed without using the value of $u$ at any other points.  An
advantage of causality free algorithms is their perfect
parallelism. If the solution is to be computed at a large number of
points, the computation is embarrassingly parallel.

Also because of the causality free property, the error does not
propagate in space. The accuracy can be kept at the same level
throughout a region and is based on the accuracy approximating the
algebraic equations~\eqref{characteristic2} and finding their roots.

\begin{remark}
If we assume $t>0$ and that $F$ is uniformly
convex, i.e., $F^{\prime\prime}(p) > \gamma >0$, then the algebraic
equations~\eqref{characteristic2} can be derived from the
Lax--Oleinik formula in \citet{evans} given here as
\begin{subequations}
\begin{align}
u(x,t)&={(F^\prime)}^{-1}\left(\frac{x-\xi(x,t)}{t}\right)\label{LO1} \\
\intertext{where}
\xi(x,t) &= \argmin_{\xi} \left\{ tF^\ast\left(\frac{x-\xi}{t}\right)+G(\xi)\right\}.\label{LO2}
\end{align}
\end{subequations}
The critical points of the minimization problem in \eqref{LO2} must
satisfy one of the following equations
\begin{subequations}
\label{LO3}
\begin{alignat}{2}
{(F^\ast)}^\prime\left(\frac{x-\xi}{t}\right)-\bg(\xi)&=0\\
\intertext{or}
   \xi &= a_k &&\quad \text{ for } 1\leq k \leq m.
\end{alignat}
\end{subequations}
It can be proved that ${(F^\ast)}^\prime = {(F^\prime)}^{-1}$.
Therefore, if we define
\[\xi = x-F^\prime (p) t\]
the minimizations~\eqref{LO2} and \eqref{cost_alg} are equivalent.
Further, the equations for the critical points \eqref{LO3} are
transformed to \eqref{characteristic2}.
\end{remark}

\section{Examples}

In the following, we illustrate
Algorithms~\ref{minimum_value_solution} and~\ref{al_entropy_solution}
using several examples with different types of flux functions,
including both space-dependent and space-independent.
A specific thing to note is the lack of Gibbs--Wilbraham oscillations
in all of the approximate solutions given in the figures below, even
in the presence of solutions with many shocks.
All examples are computed using MATLAB 8.4.0.150421 (R2014b) and
Chebfun 5.3.0 with double floating-point precision on an Apple MacBook
Pro (Retina, 15-inch, Early 2013) with a 2.7 GHz Intel Core i7 central
processing unit and 16 GB of 1600 MHz DDR3 random-access memory
running OS~X 10.10.5.

\begin{remark}
  In the examples below we provide timings of Implementation~\ref{pdeccl} of the
  causality-free Algorithm~\ref{al_entropy_solution}.  Timings are
  also given for the second-order Lax--Wendroff finite volume
  method with the van Leer limiter from
  Clawpack\footnote{\label{foot:clawver}More specifically,
    we used Clawpack \texttt{v5.3.1-11-geb31727} from
    \url{https://github.com/clawpack/clawpack} with the Chapter~11
    examples in the git repository \url{https://github.com/clawpack/apps}
    (git commit
  \texttt{ba557b49852377c05192d48289b3fbc8fea0f52e}).}~\cite{clawpack,LeVeque2002}
  when it is compared with Implementation~\ref{pdeccl}.  These numbers show the
  current performance of our implementation.  It should be noted that no
  performance tuning has been done by the authors for either code.
  In our experience if high-accuracy and/or the long time solution is desired at
  a few points in space and time then Implementation~\ref{pdeccl} is a
  competitive method.  If lower-accuracy is okay and the solution is desired on
  a dense space-time grid then grid based methods, such as the finite-volume
  method, will be more competitive than Implementation~\ref{pdeccl}.
\end{remark}

\begin{example}\label{ex:burgers_const}
We start by solving Burgers' equation
\begin{alignat*}{2}
  \bu_t+{\left( \frac{u^{2}}{2}\right)}_x&=0 && \quad\text{in }\Re\times(0,T)\\
                                \bu&=
  \begin{cases}
1, & 0\leq x \leq 1\\
0, & \text{otherwise}
    \end{cases}
 && \quad\text{on }\Re\times\{t=0\}.
\end{alignat*}
It has a known solution given in \citet[Example~3 of
Chapter~3.4]{evans}
as the following
\begin{align*}
\text{for } 0&\leq t \leq 2  & \text{and for } t &\geq 2 \\
\bu (x,t)&= \begin{cases}
0, & x<0\\
\frac{x}{t}, &0< x<t\\
1, & t<x<1+\frac{t}{2}\\
0, &x>1+\frac{t}{2}
\end{cases}
     &
\bu (x,t)&= \begin{cases}
0, & x<0\\
\frac{x}{t}, &0< x<{(2t)}^{1/2}\\
0, &x>{(2t)}^{1/2}.
\end{cases}
\end{align*}
The solution has shock wave which travel at two different speeds for
$t<2$ and $t>2$, respectively.  It also has a rarefaction wave for
$t<2$. In the computation, we do not need any information about the
shock wave speed.

The causality-free Algorithm~\ref{al_entropy_solution} is applied
using Implementation~\ref{pdeccl}.  Note that for this simple case, a
piecewise constant initial condition for Burgers' equation, the
equations for the critical points~\eqref{characteristic2} can be
solved directly and Chebfun is not required.  However, we go ahead and
use Chebfun anyways to benchmark the implementation.
The maximum pointwise error (i.e., $l^{\infty}$) for the solution $u$
on a uniform grid of $100 \times 100$ points covering the domain $[-1, 3]
\times [0.1, 4]$ (the point within \num{2.2204e-15} of the shock was
excluded) is $\num{2.2204e-16}$.  For this error calculation, the
solution is computed in parallel using a MATLAB \lstinline|parfor|
loop at the rate of $78$ points per second (excluding the time for
starting the parallel pool).  Here \lstinline|g| and \lstinline|G| are
chebfuns, the code would be faster if these were simple MATLAB
functions.

Implementation~\ref{pdeccl} provides a pointwise solution that we
choose to feed back into Chebfun to build piecewise continuous
approximations to the solution shown in
Figure~\ref{fig:burgers_const}.
\begin{figure}
  \begin{minipage}[b]{.49\linewidth}
    \centering\includegraphics{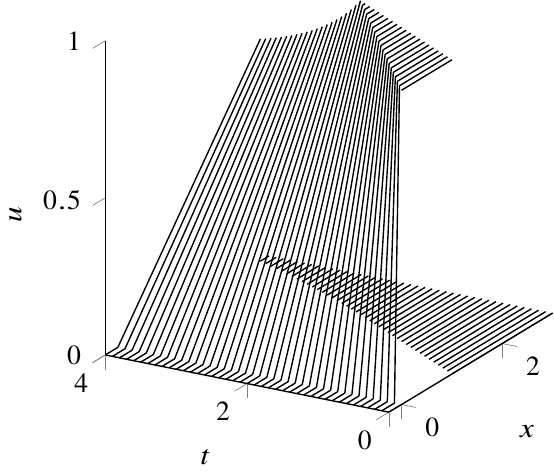}
    \subcaption{}\label{fig:1a}
  \end{minipage}%
  \hfill
  \begin{minipage}[b]{.49\linewidth}
    \centering\includegraphics{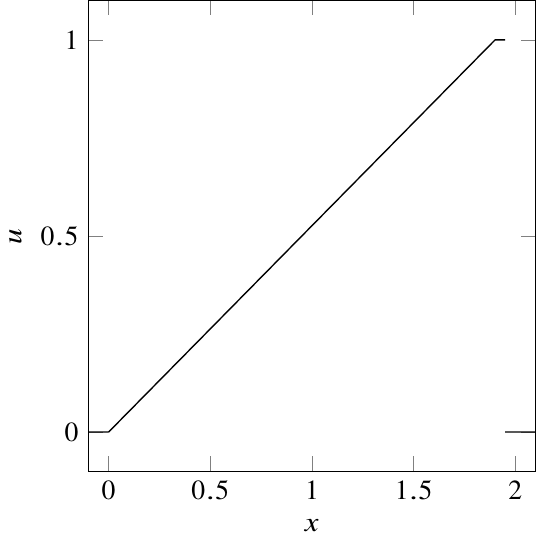}
    \subcaption{}\label{fig:1b}
  \end{minipage}
  \caption{The solution of Burgers' equation for the piecewise
    constant initial condition given in
    Example~\ref{ex:burgers_const}.
    Here (\subref{fig:1a}) is a waterfall plot of the solution and
    (\subref{fig:1b}) is the solution for
    $t=1.9$.}\label{fig:burgers_const}
\end{figure}
The code used to generate this figure is given in
Appendix~\ref{a:driver}.

\end{example}

\begin{example}\label{ex:burgers_sine}
We solve Burgers' equation
\begin{alignat*}{2}
  \bu_t+{\left( \frac{u^{2}}{2}\right)}_x&=0 && \quad\text{in }\Re\times(0,T)\\
                                \bu&=1+\sin(\pi x)
 && \quad\text{on }\Re\times\{t=0\},
\end{alignat*}
using Implementation~\ref{pdeccl} and compare the result against the
traditional method of characteristics for conservation laws.  The
maximum pointwise error (i.e., $l^{\infty}$) for the solution $u$ on
a uniform grid of $80 \times 80$ points covering the domain $[0, 4] \times
[0.1, 0.8]$ is $\num{1.2212e-14}$.  For this error calculation, the
solution is computed in parallel using a MATLAB \lstinline|parfor|
loop at the rate of $188$ points per second (excluding the time for
starting the parallel pool).  Note, here we use MATLAB functions for
\lstinline|g| and \lstinline|G|.

As in the previous example, we give the pointwise solution function
to Chebfun and build piecewise continuous approximations to the
solution shown in Figure~\ref{fig:burgers_sine}.
\begin{figure}
  \begin{minipage}[b]{.49\linewidth}
    \centering\includegraphics{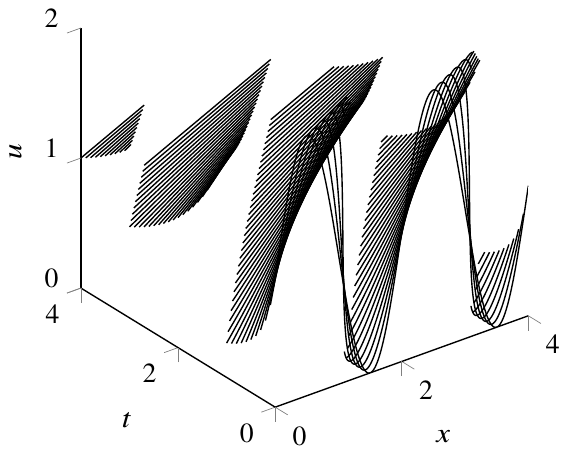}
    \subcaption{}\label{fig:2a}
  \end{minipage}%
  \hfill
  \begin{minipage}[b]{.49\linewidth}
    \centering\includegraphics{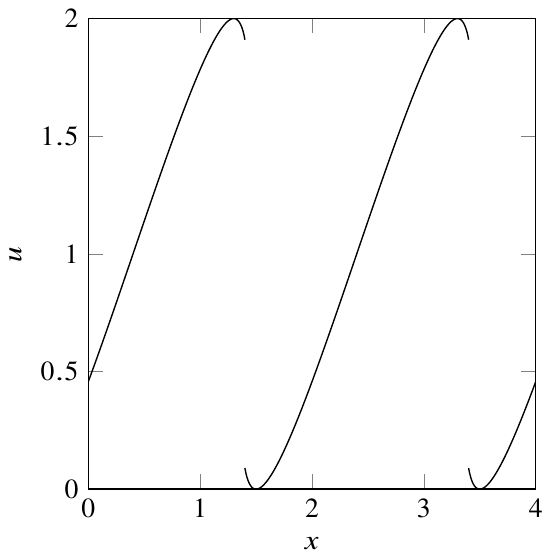}
    \subcaption{}\label{fig:2b}
  \end{minipage}
  \caption{The solution of Burgers' equation for the sinusoidal
    initial condition given in Example~\ref{ex:burgers_sine}.
    Here (\subref{fig:2a}) is a waterfall plot of the solution and
    (\subref{fig:2b}) is the solution for
    $t=0.4$.}\label{fig:burgers_sine}
\end{figure}

\end{example}

\begin{example}\label{ex:burgers_nwave}
We solve Burgers' equation with a compactly supported initial condition
\begin{alignat*}{2}
  \bu_t+{\left( \frac{u^{2}}{2}\right)}_x&=0 && \quad\text{in }\Re\times(0,T)\\
                                \bu&=\begin{cases}
(\cos(x)+1)\left(2\sin(3x)+\cos(2x)+\frac{1}{5}\right), & -\pi \leq x \leq \pi\\
0, & \text{otherwise}
\end{cases}
 && \quad\text{on }\Re\times\{t=0\},
\end{alignat*}
which is an example of N-wave decay given in Chapter~11.5 of \citet{LeVeque2002}.
As in the previous example, we give the pointwise solution function
using Implementation~\ref{pdeccl}
to Chebfun and build piecewise continuous approximations to the
solution shown in Figure~\ref{fig:burgers_nwave}.  For verification,
this figure also presents a solution to this problem using a
second-order Lax--Wendroff finite volume method with the van Leer
limiter from
Clawpack\footnoteref{foot:clawver}~\cite{clawpack,LeVeque2002}
on a grid from $[-8,8]$ of 1000 cells, which took \num{0.021} seconds to
compute to $t=1$.
\begin{figure}
  \begin{minipage}[b]{.49\linewidth}
    \centering\includegraphics{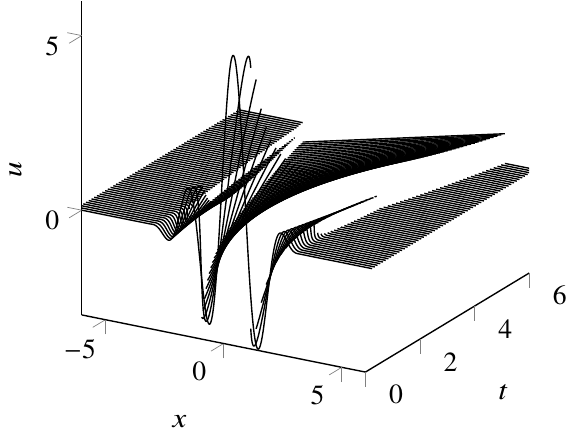}
    \subcaption{}\label{fig:3a}
  \end{minipage}%
  \hfill
  \begin{minipage}[b]{.49\linewidth}
    \centering\includegraphics{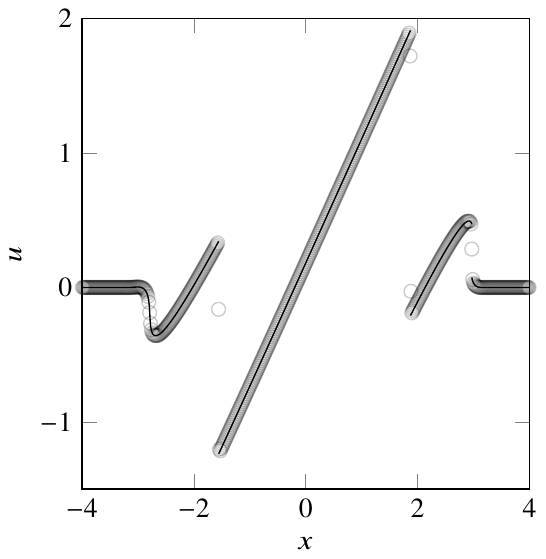}
    \subcaption{}\label{fig:3b}
  \end{minipage}
  \caption{The solution of Burgers' equation for the
    compactly supported initial condition given in
    Example~\ref{ex:burgers_nwave}.  Here (\subref{fig:3a}) is a
    waterfall plot of the solution and (\subref{fig:3b}) is the
    solution for $t=1$ where the solid line is the chebfun constructed
    by the pointwise solution using Implementation~\ref{pdeccl} and
    the transparent circles show the computed solution using a finite
    volume method as described in
    Example~\ref{ex:burgers_nwave}.}\label{fig:burgers_nwave}
\end{figure}
\end{example}

\begin{example}\label{ex:burgers_wiggly}
We solve Burgers' equation with a compactly supported initial condition
with many oscillations
\begin{alignat*}{2}
  \bu_t+{\left( \frac{u^{2}}{2}\right)}_x&=0 && \quad\text{in }\Re\times(0,T)\\
                                \bu&=\begin{cases}
                                  {\left(\sin(x)\right)}^2 + \sin(x^2), & 0 \leq x \leq 14\\
                                  0, & \text{otherwise}
\end{cases}
 && \quad\text{on }\Re\times\{t=0\},
\end{alignat*}
which we use to demonstrate the ability of the proposed algorithm to
find solutions with many shocks.  As in the previous example, we give
the pointwise solution function using Implementation~\ref{pdeccl} to
Chebfun and build piecewise continuous approximations to the solution
shown in Figure~\ref{fig:burgers_wiggly}.  Figure~\ref{fig:burgers_wiggly_img}
further illustrates the complexity of the solution.
 \begin{figure}
  \begin{minipage}[b]{.49\linewidth}
    \centering\includegraphics{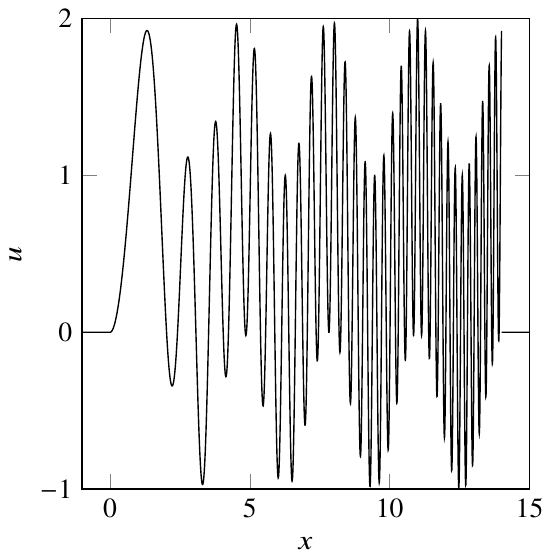}
    \subcaption{$t=0$}\label{fig:4a}
  \end{minipage}%
  \hfill
  \begin{minipage}[b]{.49\linewidth}
    \centering\includegraphics{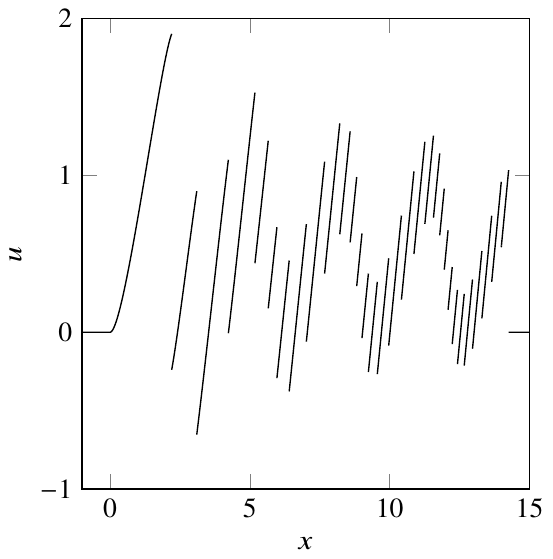}
    \subcaption{$t=\frac12$}\label{fig:4b}
  \end{minipage}
  \begin{minipage}[b]{.49\linewidth}
    \centering\includegraphics{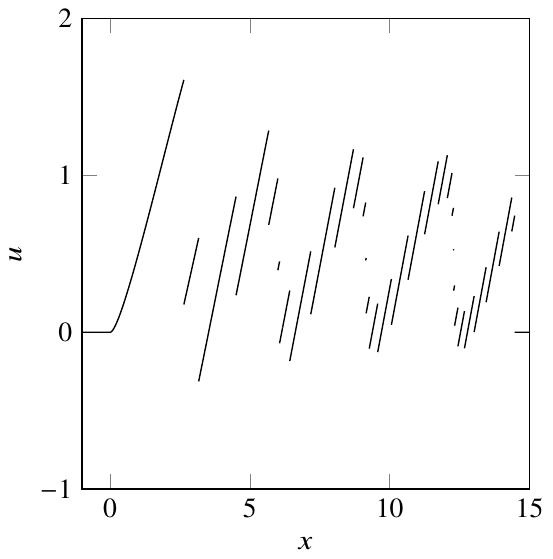}
    \subcaption{$t=1$}\label{fig:4c}
  \end{minipage}%
  \hfill
  \begin{minipage}[b]{.49\linewidth}
    \centering\includegraphics{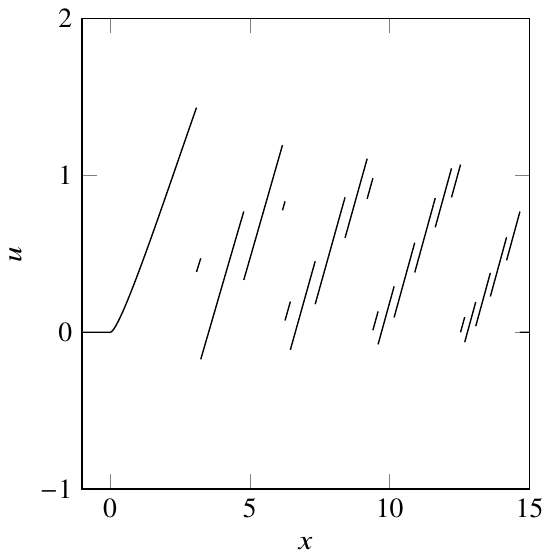}
    \subcaption{$t=\frac32$}\label{fig:4d}
  \end{minipage}

  \caption{Various time instances of the solution of Burgers'
    equation for the compactly supported oscillatory initial condition
    given in Example~\ref{ex:burgers_wiggly}.}\label{fig:burgers_wiggly}
\end{figure}
\begin{figure}
  \centering\includegraphics{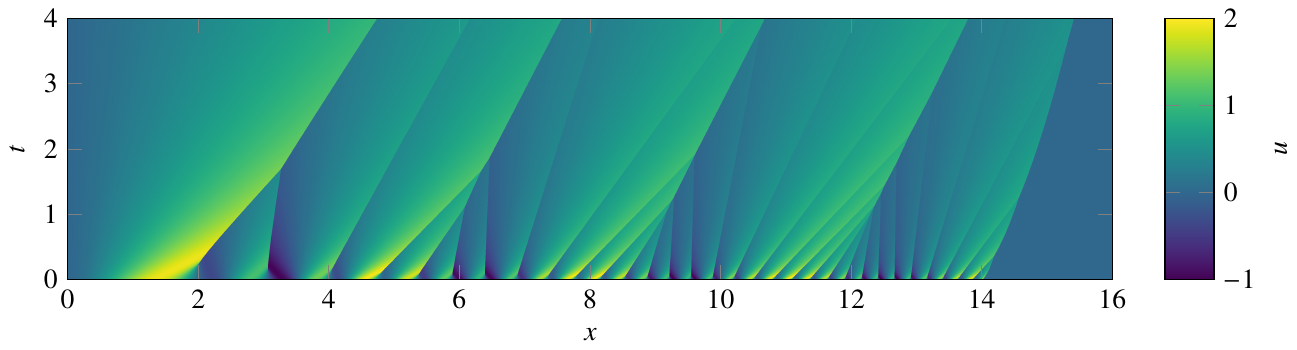}
  \caption{The solution of Burgers' equation for the compactly supported
    oscillatory initial condition given in
    Example~\ref{ex:burgers_wiggly}.}\label{fig:burgers_wiggly_img}
\end{figure}
\end{example}

\begin{example}\label{ex:traffic}
For this example consider a LWR model for traffic flow, named after
\citet{LighthillWhitham1955,Richards1956} and \citet{Richards1956},
\begin{subequations}\label{eq:LWR}
\begin{alignat}{2}
  q_t+{\left( v_{\text{max}}q(1 - q)\right)}_x&=0 && \quad\text{in }\Re\times(0,T), \\
  q&=g && \quad\text{on }\Re\times\{t=0\},
\end{alignat}
where
\begin{alignat}{2}
  g(x) =
  \begin{cases}
    \frac{1}{5} + \frac{4}{5}\exp\left(-\frac{1}{20}{\left(x-\frac{1}{3}\right)}^2\right), & -30 \leq x \leq 30\\
    \frac{1}{5}, & \text{otherwise}
  \end{cases}
 && \quad\text{for }x\in\Re,
\end{alignat}
\end{subequations}
which is an example given in Chapter~11.1 of \citet{LeVeque2002}.
Here $0\le q\le 1$ is the density of the traffic flowing at a maximum
speed $v_{\text{max}}$.  For this example we assume
$v_{\text{max}}=1$.  The flux functions for this conservation law is
concave upward and Algorithm~\ref{al_entropy_solution} requires
a conservation law with a convex downward flux function.  Thus, we let
$u = -q$ which transforms the LWR model~\eqref{eq:LWR} to the convex
conservation law
\begin{alignat*}{2}\label{eq:LWR:convex}
  u_t+{\left( v_{\text{max}}u(1 + u)\right)}_x&=0 && \quad\text{in }\Re\times(0,T), \\
  u&=-g && \quad\text{on }\Re\times\{t=0\}.
\end{alignat*}
As in Example~\ref{ex:burgers_nwave}, we give the pointwise solution
function using Implementation~\ref{pdeccl} to Chebfun and build
piecewise continuous approximations to the solution shown in
Figure~\ref{fig:traffic}.  For verification, this figure also presents
a solution to this problem using a second-order Lax--Wendroff finite
volume method with the van Leer limiter from
Clawpack\footnoteref{foot:clawver}~\citep{clawpack,LeVeque2002} on a
grid from $[-30,30]$ of 500 cells, which took \num{0.0024} seconds to reach
$t=4$.
\begin{figure}
  \begin{minipage}[b]{.49\linewidth}
    \centering\includegraphics{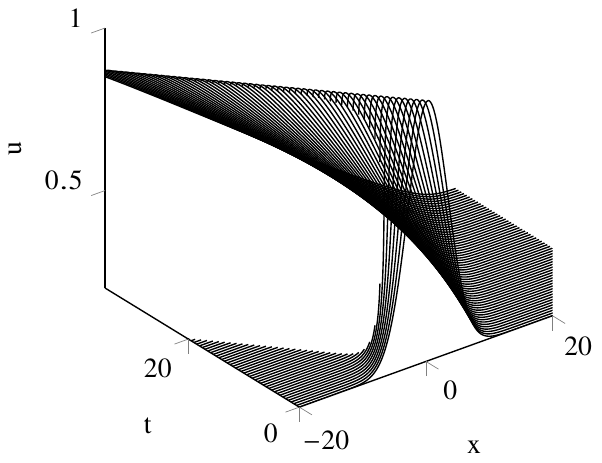}
    \subcaption{}\label{fig:5a}
  \end{minipage}%
  \hfill
  \begin{minipage}[b]{.49\linewidth}
    \centering\includegraphics{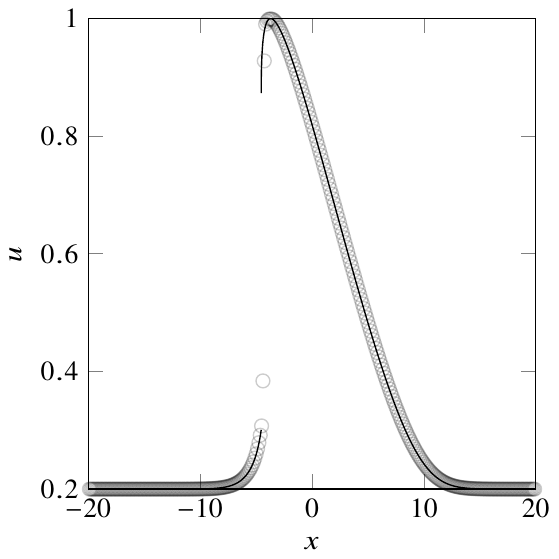}
    \subcaption{}\label{fig:5b}
  \end{minipage}
  \caption{The solution of the LWR equation for the
    initial condition given in Example~\ref{ex:traffic}.  Here
    (\subref{fig:5a}) is a
    waterfall plot of the solution and (\subref{fig:5b}) is the
    solution for $t=4$ where the solid line is the chebfun constructed
    by the pointwise solution using Implementation~\ref{pdeccl} and
    the transparent circles show the computed solution using a finite
    volume method as described in
    Example~\ref{ex:traffic}.}\label{fig:traffic}
\end{figure}
\end{example}

\begin{example}\label{ex:spacedependent}
In this example we consider the following conservation law with a
space-dependent flux function
\begin{subequations}\label{law_spacedependent}
\begin{alignat}{2}
  \bu_t+{\left( \frac{u^{2}-x^{2}}{2}\right)}_x&=0 && \quad\text{in }\Re\times(0,T),\\
                                \bu&=
  \begin{cases}
1, & -1\leq x \leq 0\\
0, & \text{otherwise}
    \end{cases}
 && \quad\text{on }\Re\times\{t=0\}.
\end{alignat}
\end{subequations}
Since the flux function is space-dependent,
Algorithm~\ref{al_entropy_solution} and the theory in
Sections~\ref{sec_3} and~\ref{sec_4} do not apply.
However, we can use Algorithm~\ref{minimum_value_solution}
to compute the unique minimum value solution. Some details required by
the algorithm is given as follows. The associated Lagrangian is
\[L(x,\alpha)=\frac{x^2+\alpha^2}{2}\]
and the Hamiltonian is
\[H(x,p,\alpha)=p\alpha + L(x,\alpha).\]
It is straightforward to derive relationship~\eqref{F2H}
\begin{align*}
\frac{p^{2}-x^2}{2}&=-\min_\alpha H(x,p,\alpha),
\end{align*}
and the associated optimal control~\eqref{alpha}
\begin{align*}
\alpha^\ast &= -p.
\end{align*}
Given any point $(x,t)$ we want to compute $u(x,t)$.
The Hamilton dynamics in \eqref{PMP} and \eqref{PMPbd} have the
following form
\begin{align*}
\dot \bx (r) &= -\bp(r),\\
\dot \bp (r) &= -\bx(r).
\end{align*}
The solution with the initial condition $\bx(0)=x$ is
\begin{align*}
\bx(r)&=(x-C)e^{-r} + Ce^{r}, \\
\bp(r)&=(x-C)e^{-r} - Ce^{r},
\end{align*}
where $C$ is an arbitrary constant.
Denote $\bx(t)$ and $\bp(t)$ by $X$ and $P$, respectively, then we
have
\begin{align*}
X&=(x-C)e^{-t} + Ce^{t}, \\
P&=(x-C)e^{-t} - Ce^{t}.
\end{align*}
Solving for $C$ in terms of $X$ we have
\begin{equation*}
  C=\frac{X-xe^{-t}}{e^{t}-e^{-t}}=2\left(X-xe^{-t}\right)\csch(t)
\end{equation*}
and thus
\begin{equation*}
P=xe^{-t} - \frac{e^{t}+e^{-t}}{e^{t}-e^{-t}}(X-xe^{-t})
 = x\csch(t)-X\coth(t).
\end{equation*}
In Step~I of Algorithm~\ref{minimum_value_solution}, we solve the
PMP~\eqref{PMP} and~\eqref{PMPbd}. It is equivalent to finding $X$
that satisfies at least one set of the following conditions
\begin{subequations}\label{PMPexample}
\begin{align}
&\begin{dcases}
X=\frac{2x}{e^{t}+e^{-t}}=x\sech(t),\\
X \geq 0 \text{ or } X \leq -1,
\end{dcases}\\
  \text{or }&
\begin{dcases}
X=\frac{2x+e^{-t}-e^{t}}{e^{t}+e^{-t}}=x\sech(t)-\tanh(t),\\
-1 < X < 0,
\end{dcases}\\
\text{or } & X = 0,\\
\text{or } & X = -1.
\end{align}
\end{subequations}
The optimal cost in Step~II is
\begin{equation}\label{cost_example}
\begin{split}
J&=\frac{C^2}{2}\left(e^{2t}-1\right)-\frac{{(x-C)}^2}{2}\left(e^{-2t}-1\right) + G(X) \\
&=\frac{x^{2}+X^{2}}{2}\coth(t)-xX\csch(t)+G(X),
\end{split}
\end{equation}
for $t> 0$ where $G$ is a continuous function satisfying  $G^\prime
= \bg$ almost everywhere.

The solution of the conservation law~\eqref{law_spacedependent} using
Algorithm~\ref{al_entropy_solution} is shown in
Figure~\ref{fig_spacedependent_LQR}.  The rarefaction and shock
waves in the solution are clearly shown in the figure.
\begin{figure}
  \begin{minipage}[b]{.49\linewidth}
    \centering\includegraphics{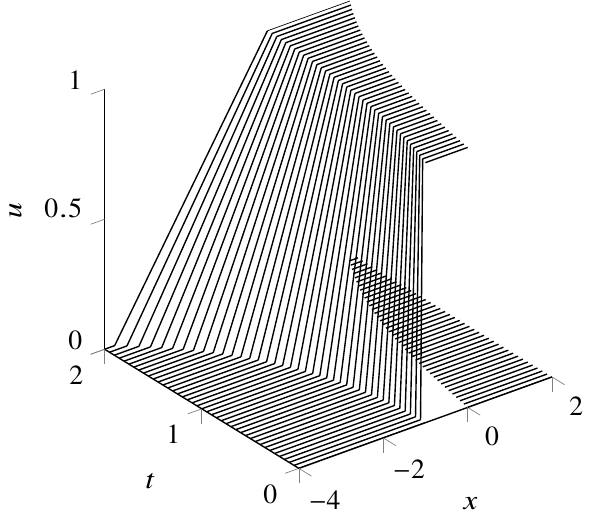}
    \subcaption{}\label{fig:6a}
  \end{minipage}%
  \hfill
  \begin{minipage}[b]{.49\linewidth}
    \centering\includegraphics{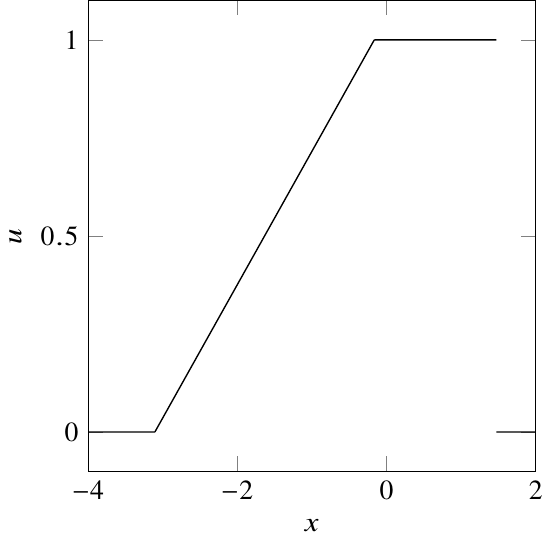}
    \subcaption{}\label{fig:6b}
  \end{minipage}
  \caption{The solution of the conservation law with a space-dependent
    flux given in Example~\ref{ex:spacedependent}.  Here
    (\subref{fig:6a}) is a waterfall plot of the solution and
    (\subref{fig:6b}) is the solution for
    $t=1.8$.}\label{fig_spacedependent_LQR}
\end{figure}
\end{example}

\begin{example}
The follow example is adopted from \citet{zhang}.  Consider the
conservation law with a space-dependent flux
\begin{subequations}\label{law_spacedependent2}
\begin{alignat}{2}
  \bu_t+{\left(\left(1+\frac{\bu}{a}\right)\bu\right)}_x&=0  && \quad\text{in }\Re\times(0,T),\\
  \bu&=-\frac{e^{-10x}}{10} && \quad\text{on }\Re\times\{t=0\},\\
  a&=e^{-10x} && \quad\text{on }\Re.
\end{alignat}
\end{subequations}
It can be shown that the problem has no shocks and the unique solution is
\begin{equation*}
\bu=-\frac{e^{-10x}}{1+9e^{-10t}}.
\end{equation*}
Again, we use Algorithm~\ref{minimum_value_solution} to compute the
unique minimum value solution.  The associated Lagrangian is the
Legendre transform of $F\circ (-1)$,
\begin{equation*}
L(x,\alpha)=\frac{a(x)}{4}{(\alpha +1)}^2.
\end{equation*}
The Hamiltonian is
\begin{equation*}
H(x,p,\alpha)=p\alpha + \frac{a(x)}{4}{(\alpha +1)}^2
\end{equation*}
and the optimal control satisfies
\begin{equation*}
\alpha^\ast =-1-\frac{2p}{a(x)}.
\end{equation*}
Then, it is straightforward to derive the PMP~\eqref{PMP}
\begin{subequations}\label{PMPexample5}
\begin{align*}
\dot \bx(r) &= -1-2e^{10\bx(r)}\bp(r),\\
\dot \bp(r) &= 10 e^{10\bx(r)}{(\bp(r))}^2,\\
\bx(0)&=x,\\
\bp(t)&=-\frac{e^{-10\bx(t)}}{10}.
\end{align*}
\end{subequations}
It is a two-point BVP of differential equations. For the purpose of
testing a numerical implementation of
Algorithm~\ref{minimum_value_solution}, we do not transform
\eqref{PMPexample5} into algebraic equations, although the
differential equations can be explicitly solved. Instead, we use
Chebfun to solve the BVP \eqref{PMPexample5} as described in
\citet{Birkisson2012}.  Due to the uniqueness of solutions, Step~II in
Algorithm~\ref{minimum_value_solution} is unnecessary.  The Chebfun implementation
of damped Newton method is used for the BVP solver with the standard
termination criteria and the standard error tolerance of $10^{-10}$ is
used.  The computation is carried out on a uniform grid of $30\times
30$ points in the region $[0, 1]\times [0.1, 0.6]$ with a maximum
absolute error of $1.33\times 10^{-13}$ is observed.
\end{example}

\section{Conclusions}\label{sec:conclusions}
Using optimal control theory, our study yields an algorithm and
implementation for finding the minimum value solution of a scalar
convex conservation laws, pointwise.  It is proved that in the case of
a space-independent flux function the minimum value solution is the
entropy solution, thus providing a generalization of the Lax--Oleinik
formula.  Numerical results show good agreement of solutions from the
proposed algorithm with that of analytical solutions and the finite
volume method.

\appendix
\section{Example using Implementation~\ref{pdeccl}}\label{a:driver}
As an example of using Implementation~\ref{pdeccl}, we present the
code used in Example~\ref{ex:burgers_const} to generate
Figure~\ref{fig:burgers_const}.
\begin{lstlisting}
Fp = @(u) u;
L  = @(u) u.^2/2;

g = chebfun({0,1,0}, [-1,0,1,3]);
G = cumsum(g);
d = minandmax(g)';
a = g.ends(abs(jump(g, g.ends)) > 10*vscale(g)*eps);

t = 0:0.1:4;
U = cell(size(t));
T = cell(size(t));
u = @(x, t) pdeccl(Fp, L, g, G, d, a, x, t);

T{1} = chebfun(0, domain(g));
U{1} = chebfun(@(x) g(x), domain(g), 'splitting', 'on', 'vectorize');
parfor i = 2:length(t(:))
  T{i} = chebfun(t(i), domain(g));
  U{i} = chebfun(@(x) u(x, t(i)), domain(g), ...
                 'splitting', 'on', 'vectorize');
end

%
x = chebfun('x', domain(g));
for i = 1:length(t(:))
  plot3(x, T{i}, U{i}, 'k'); hold on;
end
hold off;

%
figure;
k = 20;
plot(x, U{k}, 'k');
xlabel('x');
ylabel('u');
title(sprintf('$t = \%f$', T{k}(0)));
\end{lstlisting}
Here we give the point-wise solver \lstinline|pdeccl| to
\lstinline|chebfun| to generate polynomial approximations of the
solution, \lstinline|U|, for various time instances, \lstinline|T|.

\begin{acknowledgements}
 This work was supported in part by AFOSR, NRL, and DARPA\@. Thanks to Maurizio
 Falcone for his insights on fast Legendre--Fenchel transform.
\end{acknowledgements}

\bibliographystyle{spmpscinat}

\end{document}